\newtheorem{assumption}{Assumption} 
\newtheorem{remark}{Remark} 
\newtheorem{theorem}{Theorem}
\newtheorem{lemma}{Lemma} 
\newtheorem{corollary}{Corollary} 
\newtheorem{definition}{Definition}
\newtheorem{Problem}{Problem}
\newcommand{\REAL}{\ensuremath{\mathbb{R}}}
\newcommand{\beq}{\begin{equation}}
\newcommand{\eeq}{\end{equation}}
\newcommand{\bea}{\begin{eqnarray}}
\newcommand{\eea}{\end{eqnarray}}
\newcommand{\bean}{\begin{eqnarray*}}
\newcommand{\eean}{\end{eqnarray*}}
\newcommand{\bcen}{\begin{center}}
\newcommand{\ecen}{\end{center}}
\newcommand{\bitm}{\begin{itemize}}
\newcommand{\eitm}{\end{itemize}}
\newtheorem{exampl}{\bf Example}
\newcommand{\defineq}{\ensuremath{\, \triangleq \,}}    
\def\sgcnv{\hbox{$\, \bigcirc \,$\kern-0.9em\hbox{\mgop}$\,$}} 
\def\supgeno{\hbox{$\, \bigcirc \,$\kern-1.0em\hbox{$\wedge$}$\,$}} 
\def\infgeno{\hbox{$\, \bigcirc \,$\kern-1.0em\hbox{$\vee$}$\,$}} 
\newcommand{\mgop}{\ensuremath{\star}}  
\newcommand{\vct}[1]{\ensuremath{\boldsymbol{#1}}}  
\newcommand{\mtr}[1]{\ensuremath{\boldsymbol{#1}}}  
\newcommand{\mxsmp}{\ensuremath{\boxplus}} 
\newcommand{\mnsmp}{\ensuremath{\mxsmp'}} 
\newcommand{\abs}[1]{\ensuremath{\left\vert #1\right\vert}}
\newcommand{\norm}[1]{\ensuremath{\left\| #1\right\|}}
\newcommand{\paren}[1]{\ensuremath{\left( #1\right)}}
\newcommand{\clint}[1]{\ensuremath{\left[ #1\right]}}
\newcommand{\set}[1]{\ensuremath{\left\{ #1\right\}}}
\newcommand{\ssup}{\ensuremath{\vee}}
\newcommand{\sinf}{\ensuremath{\wedge}}
\newcommand{\mmult}{\ensuremath{\boxplus}}
\newcommand{\bigsup}[2]{\ensuremath{\bigvee\limits_{#1}^{#2}}}
\newcommand{\biginf}[2]{\ensuremath{\bigwedge\limits_{#1}^{#2}}}
\newcommand{\Rmax}{\ensuremath{\mathbb{R}_{\mathrm{max}}}}
\newcommand{\Rmin}{\ensuremath{\mathbb{R}_{\mathrm{min}}}}
\newcommand{\vect}[1]{\ensuremath{\clint{\begin{array}{r} #1 \end{array}}}}
\newcommand{\matr}[1]{\ensuremath{\clint{\begin{array} #1 \end{array}}}}
\newcommand{\solu}[2]{\ensuremath{S\paren{\mtr #1,\vct #2}}}
\newcommand{\x}{\ensuremath{\tilde{x}}}
\newcommand{\1}{\ensuremath{\mathbbm{1}}}
\DeclareMathOperator*{\opsup}{supp}
\newcommand{\supp}[1]{\ensuremath{\opsup{\paren{\vct{#1}}}}}
\newcommand{\argmin}{\arg\!\min}
\begin{document}
	\title{Sparsity in Max-Plus Algebra and Systems
		~\thanks{The paper was published in the  Discrete Event Dynamic Systems journal; doi:~\url{https://doi.org/10.1007/s10626-019-00281-1}. }
}
\author{Anastasios Tsiamis~\thanks{Department of Electrical and Systems Engineering, University of
		Pennsylvania, 200 South 33rd Street, Philadelphia, PA 19104, United States. (email: atsiamis@seas.upenn.edu)} and Petros Maragos~\thanks{School of Electrical and Computer Engineering, National Technical University of Athens, Zografou Campus, 15773 Athens, Greece. (email: maragos@cs.ntua.gr)}}
\maketitle

	\begin{abstract}		
	We study sparsity in the max-plus algebraic setting. We seek both exact and approximate solutions of the max-plus linear equation with minimum cardinality of support. In the former case, the sparsest solution problem is shown to be equivalent to the minimum set cover problem and, thus, NP-complete. In the latter one, the approximation is quantified by the $\ell_{1}$ residual error norm, which is shown to have supermodular properties under some convex constraints, called lateness constraints. Thus, greedy approximation algorithms of polynomial complexity can be employed for both problems with guaranteed bounds of approximation. 	
	We also study the sparse recovery problem and present conditions, under which, the sparsest exact solution solves it. Through multi-machine interactive processes, we describe how the present framework could be applied to two practical discrete event systems problems: resource optimization and structure-seeking system identification. We also show how sparsity is related to the pruning problem. Finally, we present a numerical example of the structure-seeking system identification problem and we study the performance of the greedy algorithm via simulations.
	\end{abstract}

\section{Introduction}\label{Section_Introduction}
Max-plus algebra has been used to model a subclass of nonlinear phenomena with some linear-like structure. It is obtained from the linear algebra if we replace addition with maximum and multiplication with addition \citep{Butk10}. The development of this algebraic theory was motivated by problems arising in scheduling theory, graph theory and operations research \citep{Cuni79}. Later on, max-plus algebra was also employed in discrete event systems to deal mainly with synchronization problems \citep{CDQV85,BCOQ92,1999Cohen,DeSchutter2008max}. Other applications include the max-algebraic approach to optimal control \citep{2001Litvinov,Mceneaney2006max}, general max-plus dynamical systems and control \citep{Adzkiya2015computational,2011Hardouin} and generalized HMMs for audiovisual event detection \citep{Maragos2015max}.
An extensive survey about the applications of the max-plus algebra can be found in~\cite{Gaubert2009max}.
Generalizations of max-plus algebra using other idempotent semirings are described in~\cite{gondran2008graphs}. A unification of max-type algebras and their duals using weighted lattices with applications to nonlinear dynamical systems was presented in~\cite{maragos2017dynamical}. 

Meanwhile, in the last decade, we have experienced an increase of interest in sparsity in linear equations and linear  systems. A solution of a linear equation is sparse when it has many zero elements. The reason we are interested in such solutions, is that they need less elements to describe the same information. They provide us a way of compressing the available data, throwing away those that are unnecessary \citep{Donoho2006compressed}. They also reveal the structure of partially known signals \citep{Candes2006robust} or systems \citep{ChenHero2009sparse}. In control systems, sparsity has been sought in the sense of minimizing the number of sensors or actuators, subject to energy \citep{2016Tzoumas,Summers2016submodularity} or observability-controllability constraints~\citep{Pequito2016minimum}.

 Although sparsity has been extensively studied in the linear setting \citep{elad2010sparse}, it is still not much developed in more general nonlinear settings. In this work, we aim to define and study sparsity in the max-plus algebraic setting. A sparse solution of a max-plus equation is a solution with many non-informative elements, i.e. the infinite elements. As in the linear case, such solutions use the least number of elements to describe the same information, thus yielding compressed data. But there are many other applications where sparsity could be relevant. For example, in max-plus systems (either static or dynamical), finding sparse inputs implies that we are activating fewer actuators/machines, thus, saving resources.
 Similarly, the problem of selecting few sensors to observe a max-plus system could be expressed in terms of designing sparse output matrices.
 Another application could be in max-plus system identification problems, where the sparsity structure is unknown. In this case, sparse solutions could be employed to reveal the unknown structure of the original system.

Our theoretical contributions are the following:
\begin{enumerate}[i)]
\item We define sparsity in the max-plus algebraic setting (see Section~\ref{Section_Problem}); a vector is defined to be sparse when it has many $-\infty$ elements. 
\item  We define the problem of finding the sparsest exact solution to the  max-plus equation (see problem~\eqref{problem_1}). Then, in Section~\ref{Section_Exact_Solution}, we show that this problem is equivalent to the minimum set-cover one and, thus, NP-complete (Theorem~\ref{basic_theorem_1}).
\item  We define the problem of finding the sparsest approximate solution  (see problem~\eqref{problem_2}). Here, we are searching for the sparsest solution that satisfies the following constraints: i) its $\ell_1$ approximation error is bounded and ii) it satisfies some additional convex constraints, called lateness constraints. In Section~\ref{section_approximate}, we show that the $\ell_1$-error of approximation has supermodular properties (Theorem~\ref{Supermodular_error_function}). Thus, a suboptimal greedy approximate algorithm of polynomial complexity can be employed with guaranteed bounds on the suboptimality ratio (Theorem~\ref{Theorem_supermodular_bounds_finite}). Our analysis is extended to the case when the components of the matrices are allowed to take $-\infty$ values (Theorem~\ref{Theorem_infinite_approximation_bounds}). 
\item We study the sparse recovery problem (see Section~\ref{Section_Inversion}, Theorem~\ref{Theorem_inversion}). In particular, it is explored whether we can recover a vector, for which we do not know the sparsity pattern, from its image under a max-plus linear transformation. We derive sufficient conditions, under which, we can use the sparsity framework to recover that vector and its sparsity pattern.
\end{enumerate}

The paper is organized as follows. In Section~\ref{section_max_algebra}, we revisit the max-plus equation and its properties. Section~\ref{Section_Problem} formulates the problems of finding the exact and approximate sparsest solutions to the max-plus equation. Then, in Sections~\ref{Section_Exact_Solution},~\ref{section_approximate}, we present possible solutions to the the former and the latter problem respectively. For completeness, in Section~\ref{section_approximate}, we also include a brief introduction to the supermodularity literature. In Section~\ref{Section_Inversion}, we study the sparse recovery problem. In Section~\ref{Section_Applications}, we study two applications of the sparsity framework to multi-machine interactive production processes~\citep{Butk10}: i) application to resource optimization and ii) application to structure-seeking system identification. There, we also show how our sparsity framework is related to the pruning problem~\citep{mceneaney2009pruning,gaubert2011curse}. In Section~\ref{Section_Example}, we present a numerical example of the system identification problem and we study the performance of the greedy algorithm via simulations. Finally, in Section~\ref{Section_Conclusion}, we conclude the paper and discuss possible extensions of the present work.
All proofs which do not appear in the main text are included in the Appendix.
\subsection{Related Work}
The relation between set covers and solutions to the max-plus equation has been known before~(\citep{vorobyev1967extremal,zimmermann1976extremal,Butk03,akian2005set}). 
We use those previous results to prove the equivalence between the sparsest exact solution problem and the minimum set cover problem in Theorem~\ref{basic_theorem_1}. 
Still, our paper is the first to explicitly define and study the problem of finding the sparsest exact solution.

The most related problem to sparsity is the pruning one~\citep{mceneaney2009pruning,gaubert2011curse}.
 It arises in optimal control problems, where we try to approximate value functions as the supremum of certain basis functions. The goal there is to replace the supremum over many basis functions with the supremum over a smaller subset of basis functions, which has fixed-cardinality; this subset is selected via minimizing an $\ell_1$ approximation error cost. The problem of finding the sparsest approximate solution (problem~\eqref{problem_2}) defined in this paper is a ``dual" version of the pruning one--see also Section~\ref{Section_Applications}. The minimization is over the cardinality of the subset such that the $\ell_1$-error remains bounded. Another difference is that the pruning problem deals with basis functions defined on infinite spaces; the sparsity problem deals with basis vectors instead of basis functions.
 In~\cite{gaubert2011curse}, equation~(34), the pruning problem is reduced to a $k-$median problem, which can be shown to have supermodular properties~\citep{1978Nemhauser}. This argument could lead to an alternative proof of Theorem~\ref{Supermodular_error_function}.
Finally, our sparsity framework also applies when the basis vectors are allowed to have $-\infty$ (null) components~(Theorem~\ref{Theorem_infinite_approximation_bounds}).

The recovery problem, without any sparsity considerations, is related to the uniqueness of the max-plus equation (see Chapter~15 of \cite{Cuni79} or~\cite{Butk10} or Corollary~4.8 in~\cite{akian2005set}). However, the sparse recovery problem is quite different; we might be able to solve the sparse recovery problem even if we have have infinite solutions to the max-plus equation--see Section~\ref{Section_Inversion} for more details. The results of~\cite{2006SchullerusSchutter} are related to the result of Theorem~\ref{Theorem_inversion}. However, in~\cite{2006SchullerusSchutter} there are no sparsity considerations, e.g. the sparsity pattern of the involved matrices is considered known. To the best of our knowledge, our paper is the first to define and address the sparse recovery problem.

\section{Notation and Background}\label{section_max_algebra}
Throughout this paper, matrices and vectors will be denoted by bold characters.  If $\mtr A$ is a $m\times n$ matrix then its columns ($m\times 1$ vectors) are denoted by $\vct{A}_{j}$, $j=1,\dots,n$. Its components are denoted by $A_{ij}$ or $\clint{\mtr A}_{ij}$, for $i=1,\dots,m$, $j=1,\dots,n$. The transpose matrix is denoted by ${\mtr A}^\intercal$. If $\vct x$ is a $n\times 1$ vector, its components are denoted by $x_j$ or $\clint{\vct x}_{j}$, for $j=1,\dots,n$. 
Finally, for simplicity we denote the row and column index sets by $I=\set{1,2,\dots, m}$ and $J=\set{1,2,\dots, n}$, respectively.
\subsection{The Max-Plus Algebra}
 The max-plus algebra (or the $\paren{\ssup,+}$ semiring) is the set $\Rmax=\REAL\cup \set{-\infty}$ equipped with the \emph{maximum} operator $\ssup$ as ``addition" and $+$ as ``multiplication" \citep{1997Gaubert}.\footnote{An alternative notation that has been used in the literature is $\oplus$ for maximum (max-plus ``addition") and $\otimes$ for addition (max-plus ``multiplication")--see~\cite{Cuni79} or~\cite{BCOQ92}. Here, we follow the notation of lattice theory--see~\cite{birkhoff1940lattice},~\cite{Mara13},~\cite{maragos2017dynamical}, where the symbol  $\vee/\wedge$ is used for max/min operations. 
 	 We also use the classic symbol "+" for real addition, without obscuring  the addition with the less intuitive symbol $\otimes$. Further, we avoid the symbol $\oplus$ because it is used in signal and image processing to denote max-plus convolution and in set theory to denote Minkowski set addition.  } If $x,y\in \Rmax$, then $x\ssup y\defineq \max\set{x,y}$. 
The zero  element for the maximum operator $\ssup$ is $-\infty$. The operator $+$ is defined in the usual way  with $0$ as the identity element and $-\infty$ as the null element. Similarly, we can define the min-plus algebra on $\REAL_{min}=\REAL\cup\set{+
\infty}$, equipped with the minimum operator $\sinf$ and addition $+$.

If $\vct x,\vct y \in \Rmax ^{n}$ are vectors, we overload $\ssup$ with componentwise maximum \[\clint{\vct x\ssup \vct y}_{i}=x_{i}\ssup y_{i},\,i=1,\cdots,n.\]
 Operators $<,\leq$ are interpreted with the vector partial order, induced by componentwise comparison. We also define the addition $\vct x +a$ of a scalar $a\in \Rmax$ to a vector $\vct x\in \Rmax ^{n}$ componentwise as follows:
\[\clint{\vct x+a}_i=x_i+a,\, i=1,\cdots,n,\]
This can be interpreted as the scalar ``multiplication" counterpart of linear algebra.

If $\mtr A\in \Rmax^{m\times n}$, $\mtr B\in \Rmax^{m\times n}$ are matrices, then we define their componentwise maximum $\clint{\mtr A\ssup \mtr B}_{ij}=A_{ij}\ssup B_{ij},\: i=1,\cdots,m,\:j=1,\cdots,n$. 
If $\mtr A\in \Rmax^{m\times n}$ and $\mtr B\in \Rmax^{n\times p}$, then 
 their max-plus matrix ``multiplication" $\mtr A\mxsmp \mtr B \in \Rmax^{m\times p}$ is defined as:
\begin{equation*}
\clint{\mtr A\mxsmp \mtr B}_{ij}=\bigsup{k=1}{n}\paren{ A_{ik}+ B_{kj}},\: i=1,\cdots,m,\:j=1,\cdots,p.
\end{equation*}
If $\mtr A\in \Rmin^{m\times n}$, $\mtr B\in \Rmin^{n\times p}$, their min-plus matrix multiplication $\mtr A \mnsmp \mtr B\in \Rmin^{m\times p}$ is defined similarly:
\begin{equation*}
\clint{\mtr A\mnsmp \mtr B}_{ij}=\biginf{k=1}{n}\paren{ A_{ik}+ B_{kj}},\: i=1,\cdots,m,\:j=1,\cdots,p.
\end{equation*}
\subsection{Max-plus Linear Equation and Exact Solution}
The max-plus equation has a form similar to the linear equation $\mtr A \vct x=\vct b$, though we replace addition with maximum and multiplication with addition. 
In particular, given  $\mtr A\in\Rmax^{m\times n}$, $\vct x\in \Rmax ^{n}$, $\vct b\in \REAL^{m}$, it is given by the following formula\footnote{In the general case, $\vct b\in \Rmax^{m}$ \citep{Butk10}. However, in this paper we will only consider finite $\vct b\in \REAL^{m}$. See also Assumption~\ref{assuption_A} in Section~\ref{Section_Problem}.}:
\begin{equation*}
\bigsup{j=1}{n}\paren{A_{ij}+x_{j}}=b_{i},\: i=1,\cdots,m
\end{equation*}
or in compact form
\begin{equation} \label{definition_1}
\mtr A\mxsmp \vct x=\vct b.
\end{equation}
Next,	we define the \emph{set of all solutions} of \eqref{definition_1} $S\paren{\mtr A, \vct b}$:
	\begin{equation}\label{EQN_solution_set}
S\paren{\mtr A, \vct b}=\set{\vct x \in \Rmax^n:\mtr A\mxsmp \vct x=\vct b}
	\end{equation}
We can also write \eqref{definition_1} as $\mtr A\mxsmp \vct x=\bigsup{j=1}{n}\paren{ \vct A_{j}+x_{j}}=\vct b$. 
Hence, in this form,  $\mtr A\mxsmp \vct x$ can be interpreted as a ``max-plus linear combination" of the columns of $\mtr A$ with weights $x_j$. 

 To analyze the max-plus equation, we need the definition of the \emph{principal solution} $\vct{\bar{x}}\in \Rmin^{n}$ \citep{Cuni79} \footnote{The principal solution can also be expressed in terms of residuation theory--see, for example,~\cite{BCOQ92}. The map $\Pi(\vct x)=\mtr A\mxsmp \vct x$ is residuated, with $\Pi^{\sharp}(\vct b)=(-\mtr A^{\intercal})\mnsmp \vct b$ being the residual map, where $\mnsmp$ denotes the min-plus matrix product. Both maps are increasing and they satisfy the property $\paren{\Pi\circ\Pi^{\sharp}}(\vct b)\le \vct b$, $\paren{\Pi^{\sharp}\circ\Pi}(\vct x)\ge \vct x$. Then, the principal solution $\vct{\bar{x}}$ can be written as $\vct{\bar{x}}=\Pi^{\sharp}(\vct b)$. The notion of residuated and residual maps is also related to the notion of adjunctions in lattice theory, e.g. see~\cite{Mara13},~\cite{maragos2017dynamical}, as well as the notion of Galois Connections, e.g see~\cite{akian2005set}.}:
 \[
 \vct{\bar{x}}=\paren{-\mtr A}^{\intercal}\mnsmp \vct b,
 \]
 whose components can be expressed as:
\begin{equation}\label{principal_solution}
\bar{x}_{j}=\biginf{i=1}{m}\paren{b_{i}-A_{ij}},\,\forall j\in J.
\end{equation}
Although the principal solution belongs to $\Rmin^n$, in this paper we will only deal with cases where $\vct{\bar{x}}\in \REAL^{n}$. When the max-plus equation~\eqref{definition_1} admits a solution, it turns out that the principal solution $\bar{\vct{x}}$ is also an actual solution (see Theorem~\ref{theorem_multiple_solutions} in the Appendix). In other words,  the set $S\paren{\mtr A, \vct b}$ is non-empty if and only if $\vct{\bar{x}}$ is a solution to equation \eqref{definition_1} \citep{Cuni79}.
\subsection{Max-plus Linear Equation and Approximate Solution}
Although the principal solution $\bar{\vct{x}}$ is always defined, it may not be a solution of \eqref{definition_1}. In this case, system~\eqref{definition_1} cannot be solved. However, we may find an approximate solution, by minimizing the $\ell_{1}$ norm of the residual error $\vct b - \mtr A \mxsmp \vct x$. Still, without any additional constraint this problem is hard to solve.
For this reason, the convex constraint 
\begin{equation}
\mtr A \mxsmp \vct x\le \vct b, \label{lateness_constraint}
\end{equation}
also called the \emph{lateness constraint} \citep{Cuni79}, is added to the minimization problem. This relaxation, adopted in \cite{Cuni79}, is also motivated by time constraints in operations research (see also Section~\ref{Section_Applications_Resource}). The approximate solution problem can be described with the following optimization problem:
\begin{equation}\label{EQN_approximate}
\begin{aligned}
&\underset{x \in\Rmax^n}{\textup{minimize}} &&\norm{\vct b - \mtr A \mxsmp \vct x}_{1} \\
&\textup{subject to} && \mtr A \mxsmp \vct x\le \vct b,
\end{aligned} 
\end{equation}
which can be recast as a linear program. 
It turns out that the principal solution $\vct{\bar{x}}$ is the largest possible element that satisfies the constraint $\mtr A \mxsmp \vct x\le \vct b$. Therefore, it is also an optimal solution to problem~\eqref{EQN_approximate} (see Theorem~\ref{Theorem_approximate_max_plus_solution} in the Appendix).

 \section{Problem Statement}\label{Section_Problem}
 In this section, we define the problem of finding the sparsest exact and approximate solutions to the max-plus equation $\mtr A\mxsmp \vct x=\vct b$. In linear algebra, the sparsity pattern of a vector or a matrix is determined by the set of its nonzero components.
 In a similar fashion, in max-plus algebra,  the sparsity pattern of any matrix or vector is determined by the set of its finite elements, since the zero element is $-\infty$.  
We define the \emph{support} of an element $\vct x\in \Rmax^{n}$ as 
\[\supp{x}=\set{j\in J:\:x_{j}\neq -\infty},\]
i.e. the set of the indices of its finite components. 

The first problem studied in this paper is finding the sparsest solution to  equation~\eqref{definition_1}.
 Formally, given the matrices $\mtr A\in \Rmax^{m\times n},\,\vct b\in \REAL^{m}$, we want to determine the optimal (possibly non-unique) solution $\vct x^{*}\in\Rmax^{n}$ to the following optimization problem:
 \begin{equation}\label{problem_1}
 \begin{aligned} 
 \vct x^{*}=&\argmin\limits_{\vct x\in\Rmax^{n}} &&\abs{\supp{x}}\\
 &\text{subject to}&&\mtr A\mxsmp \vct x=\vct b
 \end{aligned}
 \end{equation}
 where $\abs{T}$ denotes the cardinality of a set $T$.
 
However, a solution to equation $\mtr A\mxsmp \vct x=\vct b$ may not exist. Meanwhile, solving problem~\eqref{EQN_approximate} might not work either, since it does not guarantee a sparse approximate solution. 
Instead of optimizing with respect to the residual error, one option would be to search for sparse approximate solutions to~\eqref{definition_1}, within some allowed error. 
We define an $\epsilon$\emph{-approximate solution} to \eqref{definition_1} as a vector $\vct x\in \Rmax^{n}$ that: i) has residual error bounded by positive constant $\epsilon>0$ or $\norm{\vct b-\mtr A\mxsmp \vct x}_{1}\le \epsilon$, and ii) satisfies the lateness constraint $\mtr A\mxsmp \vct x\le \vct b$.
 
 In the second problem, given a prescribed constant $\epsilon>0$, we seek the sparsest (possibly non-unique) $\epsilon$-approximate solution. Equivalently, we solve the optimization problem:
 \begin{equation}\label{problem_2}
 \begin{aligned} 
 \vct x^{*}=&\argmin_{x\in\Rmax^{n}}&& \abs{\supp{x}}\\
&\text{subject to}&&\norm{\vct b-\mtr A\mxsmp \vct x}_{1}\le \epsilon\\
& &&\mtr A\mxsmp \vct x\le \vct b
 \end{aligned}
 \end{equation}
 We may recover the exact sparsest solution problem if we select $\epsilon=0$. Notice that we need  to select $\epsilon\ge \norm{\vct b-\mtr A\mxsmp \vct{\bar x}}_{1}$ in order to guarantee feasibility of problem~\eqref{problem_2} (follows from~Theorem~\ref{Theorem_approximate_max_plus_solution}).

 To guarantee that the problem we are solving is not trivial, we make the following assumption about  $\mtr A$ and  $\vct b$, which \emph{holds throughout the paper}. It has been a standard assumption in the literature (see chapter 15 in \cite{Cuni79}).
 \begin{assumption}\label{assuption_A}
 	All elements of $\vct b$ in equation \eqref{definition_1} are finite: $\vct b\in \REAL^{m}$. Every row and column of matrix $\mtr A\in\Rmax^{m\times n}$ in \eqref{definition_1} has at least one finite element\footnote{Such matrices are also called doubly $\REAL$-astic in \cite{Butk10} and doubly G-astic in \cite{Cuni79}.} :
 	\begin{align*}
 	&\text{$i$-th row: }&&\bigsup{k=1}{n}A_{ik}\neq -\infty,\quad i=1,\dots,m\\
 	&\text{$j$-th column: }&&\bigsup {l=1}{m}A_{lj}\neq -\infty,\quad j=1,\dots,n
 	\end{align*}
 \end{assumption} 
 If this assumption is not satisfied, it leads to trivial situations. For example, if the $k$-th column $\vct A_{k}$ consists only of $-\infty$ elements, then $x_{k}$ does not influence the solution at all, since $A_{ik}+x_{k}=-\infty,\: i=1,\dots m$ for every $\vct x \in \Rmax^{n}$. So, we may remove $k$-th column  and variable $x_{k}$ without any effect.
 
 \begin{remark}
 The lateness constraint $\mtr A\mxsmp \vct x\le \vct b$ is desirable in many discrete-event systems applications (see also Section~\ref{Section_Applications_Resource}), where we want some tasks to be completed at times $\mtr A\mxsmp \vct x$, before some deadlines $\vct b$. In general, it makes problem~\eqref{problem_2} more tractable. However, in other situations where it is not needed, it might lead to less sparse solutions or higher residual error. It is a subject of future work to explore how we could remove it in problem~\eqref{problem_2}.
 \end{remark}

 \begin{remark}
 	The sparsest solution problem makes sense even if $m>n$ and we have an overdetermined system. When system \eqref{definition_1} is solvable, we might have infinite solutions. Among those solutions some might be sparse. 
 \end{remark}

In the following sections we study problems~\eqref{problem_1},~\eqref{problem_2}. Then, we explore the sparse recovery problem as well as applications.

\section{Sparsest Exact Solution}\label{Section_Exact_Solution}
In this section, we present our results about the solution to the first problem~\eqref{problem_1}. 
We show that the sparsest solution problem is equivalent to a minimum set cover problem and, thus, NP-complete. Recall that $J=\set{1,\dots,n}$ is used for column indices, while $I=\set{1,\dots,m}$ is used for row indices.

Although the principal solution $\vct{\bar{x}}$ defined in \eqref{principal_solution} is a solution when $S(\mtr A,\,\vct b)$ is non-empty, it is not sparse, as the next result shows. 
\begin{lemma}\label{LEMMA_finite_principal}
	Under Assumption \ref{assuption_A}, the principal solution $\vct{\bar{x}}$ of \eqref{definition_1}, defined in \eqref{principal_solution}, is finite or equivalently $\vct{\bar{x}}\in \REAL^{n}$. \hfill $\diamond$ 
\end{lemma}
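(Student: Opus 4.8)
The plan is to establish the conclusion componentwise: fix $j\in J$ and show that $\bar{x}_{j}$ is a genuine real number, using the formula $\bar{x}_{j}=\biginf{i=1}{m}\paren{b_{i}-A_{ij}}$ from \eqref{principal_solution}. First I would dispose of the easy direction, namely that $\bar{x}_{j}\ne-\infty$. Since $\vct b\in\REAL^{m}$ and each $A_{ij}\in\Rmax=\REAL\cup\set{-\infty}$, every term $b_{i}-A_{ij}$ lies in $\REAL\cup\set{+\infty}$ (it is a finite real when $A_{ij}$ is finite, and equals $+\infty$ when $A_{ij}=-\infty$); in particular no term equals $-\infty$, and because the minimum runs over the finite index set $\set{1,\dots,m}$ it is attained at some term, so $\bar{x}_{j}>-\infty$. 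This is nothing more than the already-noted fact that $\vct{\bar x}\in\Rmin^{n}$.

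The substantive direction is ruling out $\bar{x}_{j}=+\infty$, and for this I would invoke the column part of Assumption~\ref{assuption_A}: the $j$-th column of $\mtr A$ has a finite entry, say $A_{i_{0}j}\in\REAL$. Then $b_{i_{0}}-A_{i_{0}j}$ is a finite real number, and since $\bar{x}_{j}$ is a minimum over \emph{all} rows $i$, we get $\bar{x}_{j}\le b_{i_{0}}-A_{i_{0}j}<+\infty$. Combining the two bounds yields $\bar{x}_{j}\in\REAL$, and since $j\in J$ was arbitrary, $\vct{\bar x}\in\REAL^{n}$.

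There is no real obstacle here; the statement is a direct consequence of the finiteness of $\vct b$ together with the hypothesis that no column of $\mtr A$ is identically $-\infty$. (Note that the row part of Assumption~\ref{assuption_A} is not needed for this particular lemma — it is used elsewhere, e.g.\ for solvability-type statements.)
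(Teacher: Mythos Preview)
Your proof is correct and follows essentially the same route as the paper's: both argue componentwise, first using finiteness of $\vct b$ to get $\bar{x}_{j}>-\infty$, then using the column part of Assumption~\ref{assuption_A} to produce a finite upper bound $b_{i_0}-A_{i_0 j}$. Your remark that the row part of the assumption is not needed here is accurate and a nice addition.
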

\begin{proof}
Since $b_i$ is finite (Assumption~\ref{assuption_A}), for every $i\in I,\,j\in J$, we have $b_i-A_{ij}>-\infty$. Thus, $\bar{x}_{j}>-\infty$, for all $j\in J$. Moreover, from Assumption~\ref{assuption_A}, for every $j\in J$, there exists at least one $k\in I$, such that $A_{kj}$ is finite, which implies $b_k-A_{kj}$ is finite. Thus, \[\bar{x}_j=\biginf{i=1}{m}\paren{b_{i}-A_{ij}}\le b_k-A_{kj}<+\infty,\] 
and $\bar{x}_j$ is finite for all $j\in J$. \hfill $\qed$
\end{proof}

The above result implies that we should find another way to compute sparse solutions. In particular,  we can leverage results from \cite{vorobyev1967extremal}, \cite{zimmermann1976extremal}, (see \cite{Butk03} for an English source or Theorem~\ref{theorem_multiple_solutions} in the Appendix) which show that any solution of equation~\eqref{definition_1} has to agree with the principal one at some components. To each element $\vct x \in S\paren{\mtr A,\vct b}$, we assign the set of indices $J_{x}$, which indicates the components where $\vct x$ agrees with $\vct{\bar{x}}$:
\begin{equation}\label{EQN_agreement_set}
J_{x}=\set{j\in J: x_{j}=\bar{x}_{j}}
\end{equation}
We will call this set the \emph{agreement set} of $\vct x $. By Lemma~\ref{LEMMA_finite_principal}, since $\vct{\bar x}$ if finite, we have $J_x\subseteq \supp{x}$ for every solution $\vct x\in\solu{A}{b}$. 
The main idea is that if $\vct x\in\solu{A}{b}$ is a solution, we can construct a new sparser solution $\vct{\hat{x}}\in\solu{A}{b}$ such that $\supp{\hat{x}}=J_x$. Thus, solving the sparsest solution problem is equivalent to finding an agreement set of the smallest possible cardinality $\abs{J_x}$.

However, we cannot have arbitrarily small agreement set $J_x$. There are some necessary  conditions that should be satisfied. For each $j\in J$, we define the set of row indices $I_j\subseteq I$, where the minimum in \eqref{principal_solution} is attained: 
\begin{equation} \label{sets_which_cover}
I_{j}=\set{i\in I:\quad b_{i}-A_{ij}=\biginf{k=1}{m}\paren{b_{k}-A_{kj}}=\bar{x}_{j}}.
\end{equation}
 Those necessary conditions require the sub-collection $I_j$, $j\in J_x$ to be a set cover of $I$ (see Theorem~\ref{theorem_multiple_solutions} in the Appendix).

The next theorem proves that the solution to problem \eqref{problem_1} can be reduced to finding the minimum set cover of $I$, by the subsets $I_j$, $j\in J$; the minimum is with respect to the number of subsets required for the cover. Conversely, any minimum set cover problem can be reduced to solving an instance of problem \eqref{problem_1}, for suitably defined matrices $\mtr A,$ $\vct b$. Thus, problem \eqref{problem_1} is NP-complete.

\begin{theorem} \label{basic_theorem_1}
	
\noindent	i) The problem \eqref{problem_1}  of computing the sparsest max-plus solution  is equivalent to finding the minimum set cover of $I$ by the subset-collection $\set{I_{j}:\:j\in J}$ defined in \eqref{sets_which_cover}.
	In particular, let $\vct{\bar{x}}$ be the principal solution defined in~\eqref{principal_solution}. Given a minimum set cover $\set{I_{j}:\:j\in K^{\star}}$, $K^{\star}\subseteq J$, the element $\vct{\hat{ x}}\in \Rmax^{n}$ defined as:
	\begin{equation}\label{EQN_sparse_solution}
	\begin{aligned}
	\hat{x}_{j}&=\bar{x}_{j},&&\: j\in K^{\star}\\
	\hat{x}_{j}&=-\infty,&&\: j\in J\setminus K^{\star},
	\end{aligned}
	\end{equation}
	is an optimal solution to problem~\eqref{problem_1}.

\noindent	ii)		 Any minimum set cover problem can be reduced to solving problem \eqref{problem_1}, for suitably defined matrices $\mtr A,$ $\vct b$. Thus, problem \eqref{problem_1} is NP-complete.
	 \hfill $\diamond$ 
\end{theorem}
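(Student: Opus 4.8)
The plan is to prove (i) by identifying the optimal value of \eqref{problem_1} with the minimum set‑cover number and checking that the explicit vector in \eqref{EQN_sparse_solution} realizes this correspondence, and then to prove (ii) by a polynomial‑time reduction from minimum set cover. For (i), I would start from the characterization of $S(\mtr A,\vct b)$ recorded in Theorem~\ref{theorem_multiple_solutions}: when the system is solvable, (a) every solution $\vct x$ has $\set{I_j:j\in J_x}$ covering $I$, where $J_x$ is its agreement set \eqref{EQN_agreement_set}, and (b) conversely, for any $K\subseteq J$ such that $\set{I_j:j\in K}$ is a cover, the vector equal to $\bar x_j$ on $K$ and to $-\infty$ off $K$ lies in $S(\mtr A,\vct b)$. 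Direction (b) also has a one‑line direct verification: for each row $i$ choose $j\in K$ with $i\in I_j$, so $A_{ij}+\bar x_j=b_i$, which gives ``$\ge b_i$'' in row $i$, while ``$\le b_i$'' holds because the vector is coordinatewise $\le\vct{\bar x}$ and $\mtr A\mxsmp\vct{\bar x}\le\vct b$.

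Next I would assemble two inequalities. Let $p^\star$ be the optimal value of \eqref{problem_1} and $c^\star$ the minimum number of subsets in a cover of $I$ by $\set{I_j:j\in J}$. For $p^\star\ge c^\star$: take an optimal $\vct x^\star$; by Lemma~\ref{LEMMA_finite_principal} the principal solution is finite, so $J_{x^\star}\subseteq\supp{x^\star}$, and by (a) the collection $\set{I_j:j\in J_{x^\star}}$ covers $I$, whence $c^\star\le\abs{J_{x^\star}}\le\abs{\supp{x^\star}}=p^\star$. For $p^\star\le c^\star$: take a minimum cover $K^\star$; by (b) the vector $\vct{\hat x}$ of \eqref{EQN_sparse_solution} solves the system, and since $\vct{\bar x}\in\REAL^n$ (Lemma~\ref{LEMMA_finite_principal}) its support is exactly $K^\star$, so $p^\star\le\abs{K^\star}=c^\star$. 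Hence $p^\star=c^\star$, and retracing the chain shows $\vct{\hat x}$ attains $p^\star$, i.e.\ it is an optimal solution of \eqref{problem_1}; this is the asserted equivalence.

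For (ii), given a set‑cover instance with universe $I=\set{1,\dots,m}$ and subsets $S_1,\dots,S_n\subseteq I$, first discard any $S_j=\keno$ (empty subsets never help a cover), so we may assume each $S_j\neq\keno$. Put $\vct b=\vct 0\in\REAL^m$ and $A_{ij}=0$ when $i\in S_j$, $A_{ij}=-1$ when $i\notin S_j$. All entries are finite, so Assumption~\ref{assuption_A} holds; a short computation gives $\bar x_j=\biginf{i=1}{m}(-A_{ij})=0$ for every $j$ (using $S_j\neq\keno$), hence $I_j=\set{i:-A_{ij}=0}=S_j$. The system is solvable iff $\bigcup_j S_j=I$, i.e.\ exactly when a cover exists, and in that case part (i) says the optimal value of \eqref{problem_1} for this $\mtr A,\vct b$ equals the minimum cover size while an optimal $\vct{\hat x}$ exhibits a minimum cover through its support. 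This reduction is polynomial, and the decision version of \eqref{problem_1} (``is there $\vct x\in S(\mtr A,\vct b)$ with $\abs{\supp{x}}\le k$?'') is in NP, a certificate being a set $K\subseteq J$ with $\abs{K}\le k$ and $\bigcup_{j\in K}I_j=I$ (the $\vct{\hat x}$ built from $K$ then witnesses feasibility). Therefore \eqref{problem_1} is NP‑complete.

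The main obstacle is part (i), and specifically the two bookkeeping points: that the agreement set of any solution is contained in its support --- which fails without finiteness of $\vct{\bar x}$, and is precisely why Lemma~\ref{LEMMA_finite_principal} is proved first --- and the support‑compression step (b), that zeroing all off‑agreement coordinates to $-\infty$ preserves being a solution; both are consequences of Theorem~\ref{theorem_multiple_solutions}, so the substantive work lives in that appendix result and here it is mostly a matter of combining $p^\star\le c^\star$ and $c^\star\le p^\star$ cleanly. Part (ii) is then routine, the only subtlety being the removal of empty $S_j$ so that the computed $I_j$ genuinely equals $S_j$.
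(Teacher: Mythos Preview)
Your proposal is correct and follows essentially the same approach as the paper: both parts rest on the covering characterization of solutions (Theorem~\ref{theorem_multiple_solutions}), with part~(i) matching the optimal support size to the minimum cover size via the two inequalities you set up, and part~(ii) using an indicator-type encoding of the set-cover instance. The only cosmetic differences are your choice of $\vct b=\vct 0$ and $A_{ij}\in\{0,-1\}$ versus the paper's $b_i=1$ and $A_{ij}\in\{0,1\}$, and your explicit treatment of empty $S_j$ and NP membership, which the paper leaves implicit.
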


\begin{remark}[Suboptimal solution to problem~\eqref{problem_1}]
	According to Theorem~\ref{basic_theorem_1}, we can solve problem~\eqref{problem_1}, by finding the minimum set cover $\set{I_j:\:j\in K^{*}}$ of $I$, and by using~\eqref{EQN_sparse_solution} to construct an optimal solution $\vct{ x^*}$. Although the minimum set cover is an NP-complete problem, it can be approximated by a greedy algorithm of polynomial complexity with approximation ratio $1+\log\paren{n}$ \citep{chvatal1979}. 
	Alternatively, we could solve problem~\eqref{problem_1} by solving problem~\eqref{problem_2} for $\epsilon=0$, using the techniques of Section~\ref{section_approximate}.
\end{remark}
The next example illustrates the results of this section.
\begin{exampl} \label{EXAMPLE_sparse}
	\normalfont 
	Suppose we are given the equation
	\begin{equation*}
	\matr{{rrr} 1&0&1 \\ -2&2&1\\ 1&1&0}\mmult\vect{x_{1}\\x_{2}\\x_{3}}=\vect{2\\0\\2}
	\end{equation*}
	From \eqref{principal_solution}, the principal solution is 
	\[\vct{\bar x}=\matr{{c}(2-1)\sinf (0+2)\sinf(2-1)\\(2-0)\sinf (0-2)\sinf (2-1)\\ (2-1)\sinf (0-1) \sinf (2-0)}=\matr{{r}1\\-2\\-1}.\]
	From \eqref{sets_which_cover}, the row index sets $I_j$ are:
	\[
	I_1=\set{1,3},\,I_2=\set{2},\,I_3=\set{2}.
	\]
	The minimum set cover of $I=\set{1,2,3}$ is either $I_{1}\cup I_{2}$ or $I_{1}\cup I_{3}$. Hence, we have two possible sparsest solutions: $\vct x^{*}_{1}=\matr{{ccc}1&-2&-\infty}^{T}$ and $\vct x^{*}_{2}=\matr{{ccc}1&-\infty&-1}^{T}$.\hfill $\diamond$
\end{exampl}

\section{Approximate Solution and Supermodular Approach}\label{section_approximate}
In this section, we present the approximate solution to problem~\eqref{problem_2}, which uses tools from the supermodular optimization literature; a brief introduction to supermodularity is included in Subsection~\ref{Section: Supermodular}. In Subsection~\ref{Section: Reformulation}, we reformulate problem~\eqref{problem_2} to a simpler one, where we only optimize over the support of the optimal solution. Then, in Subsection~\ref{Section: Finite Case}, we prove that this new optimization problem has supermodular properties if $\mtr A$ has only finite elements (Theorem~\ref{Supermodular_error_function}). This allow us to approximately solve problem~\eqref{problem_2} via a greedy algorithm of polynomial complexity with guaranteed bounds of approximation (Theorem~\ref{Theorem_supermodular_bounds_finite}). In some sense, this greedy solution is similar to the ``matching pursuit" algorithm in \cite{MallatZhang1993}, applied to linear systems.
Finally, in Subsection~\ref{Section: Infinite Case}, we extend the results to the case where matrix $\mtr A$ can also have infinite elements (Theorem~\ref{Theorem_infinite_approximation_bounds}).   

\subsection{Supermodularity Preliminaries}\label{Section: Supermodular}
Supermodularity \citep{2012Krause} is a property of set functions, which enables us to approximately solve some optimization problems of combinatorial complexity. In particular, greedy algorithms of polynomial complexity can be employed, with theoretical guarantees (bounds) regarding the ratio of approximation \citep{1982Wolsey}, \citep{1978Nemhauser}.
A \emph{set function} $f:2^{J}\rightarrow \REAL$
is a function that takes a subset $T\subseteq J$ and returns a real value $f\paren{T}$. 
Two useful properties of set functions are supermodularity and monotonicity. A set function $f:2^{J}\rightarrow \REAL$ is \emph{supermodular} if for every $C\subseteq B\subseteq J$ and $k\in J$:
\begin{equation}\label{definition_supermodular}
f\paren{C\cup \set{k}}-f\paren{C}\le f\paren{B\cup \set{k}}-f\paren{B}
\end{equation}
Respectively, a set function $f:2^{J}\rightarrow \REAL$ is \emph{decreasing} if for every $C\subseteq B\subseteq J$, $f\paren{C}\ge f\paren{B}$. 

Finally we present a result from \cite{1982Wolsey}\footnote{The result in \cite{1982Wolsey} is for submodular and increasing functions. But $f$ is supermodular (decreasing) if and only if $-f$ is submodular (increasing). Hence, the result is also valid for supermodular and decreasing functions.}, which shows how we can approximately solve cardinality minimization problems subject to a supermodular equality constraint. Let the optimization problem be:
\begin{equation}\label{cardinality_problem}
\begin{aligned}
&\underset{T\subseteq J}{\text{minimize}}&& \abs{T}\\
&\text{subject to }&& f\paren{T}= f\paren{J}
\end{aligned}
\end{equation}
where $f:2^{J}\rightarrow \REAL$ is supermodular and decreasing, while $\abs{T}$ denotes the cardinality of set $T$.
Suppose we use the following greedy algorithm. 
\begin{algorithm}
	\caption{Greedy Approximate Solution of \eqref{cardinality_problem}}
	\label{Algorithm_preliminaries}
	\begin{algorithmic}[1] {}
		\State{Set $T_{0}=\emptyset$, $k=0$} 
		\While{$f\paren{T_k}\neq f\paren{J}$}
		\State{$k=k+1$}
		\State{$j=\argmin\limits_{s\in J\setminus T_{k-1}}\set{f\paren{T_{k-1}\cup \set{s}}}$}
		\State{$T_{k}=T_{k-1}\cup\set{j}$}
		\EndWhile  
		\State{\Return$T_{k}$}
	\end{algorithmic}
\end{algorithm}

The following theorem provides a bound on the approximation ratio of Algorithm~\ref{Algorithm_preliminaries}.
\begin{theorem}[\cite{1982Wolsey}] \label{Theorem_supermodular_bounds}
	Suppose $f:2^J\rightarrow \REAL$ is supermodular and decreasing. Algorithm \ref{Algorithm_preliminaries} returns a suboptimal solution $T_{k}\subseteq J$ to problem \eqref{cardinality_problem} with $\abs{T_{k}}=k$. If $T^{*}$ is the optimal solution then the following bound holds
	\begin{equation}\label{basic_bound}
	\frac{\abs{T_{k}}}{\abs{T^{*}}}\le 1+\log\paren{\frac{f\paren{\emptyset}-f\paren{J}}{f\paren{T_{k-1}}-f\paren{J}}}
	\end{equation}
	\hfill $\diamond$
\end{theorem}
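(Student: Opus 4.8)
The plan is to run the classical greedy set-cover analysis, carried over from the submodular/increasing world to the supermodular/decreasing one by a change of sign. First I would put $g\paren{T}\defineq f\paren{\emptyset}-f\paren{T}$; by \eqref{definition_supermodular} and the fact that $f$ is decreasing, $g$ is submodular, increasing, and normalized ($g\paren{\emptyset}=0$), and the equality constraint $f\paren{T}=f\paren{J}$ of problem~\eqref{cardinality_problem} turns into $g\paren{T}=g\paren{J}$. I would also dispose of the degenerate case $f\paren{\emptyset}=f\paren{J}$ (then every $T$ is feasible, the while loop of Algorithm~\ref{Algorithm_preliminaries} never executes, $k=0$, and there is nothing to prove), so that $g\paren{J}>0$ and the logarithm in \eqref{basic_bound} is well defined. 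Note that since $J$ is finite and $T=J$ trivially satisfies $f\paren{J}=f\paren{J}$, the algorithm terminates after at most $\abs{J}$ iterations.

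Next I would track the residual quantity $r_i\defineq f\paren{T_i}-f\paren{J}=g\paren{J}-g\paren{T_i}\ge 0$, so that $r_0=f\paren{\emptyset}-f\paren{J}$ and the loop stops at the first index $k$ with $r_k=0$, while $r_{k-1}>0$. The core step is the one-iteration contraction $r_i\le r_{i-1}\paren{1-\tfrac{1}{\abs{T^{*}}}}$. Fix $i$ with $r_{i-1}>0$. Since $g$ is increasing and $g\paren{T^{*}}=g\paren{J}$ we get $g\paren{T^{*}\cup T_{i-1}}=g\paren{J}$, so submodularity gives
\[
r_{i-1}=g\paren{J}-g\paren{T_{i-1}}=g\paren{T^{*}\cup T_{i-1}}-g\paren{T_{i-1}}\le \sum_{j\in T^{*}\setminus T_{i-1}}\paren{g\paren{T_{i-1}\cup\set{j}}-g\paren{T_{i-1}}}.
\]
Hence some $j\in T^{*}\setminus T_{i-1}$ has $g$-marginal gain at least $r_{i-1}/\abs{T^{*}}$. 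Since line~4 of Algorithm~\ref{Algorithm_preliminaries} selects the $j$ minimizing $f\paren{T_{i-1}\cup\set{j}}$, equivalently maximizing the $g$-marginal gain, the element actually chosen has marginal gain at least $r_{i-1}/\abs{T^{*}}$, and therefore $r_i=r_{i-1}-\paren{g\paren{T_i}-g\paren{T_{i-1}}}\le r_{i-1}\paren{1-\tfrac{1}{\abs{T^{*}}}}$.

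Finally I would iterate this bound and use $1-x\le e^{-x}$ to get $r_{k-1}\le r_0\paren{1-\tfrac1{\abs{T^{*}}}}^{k-1}\le r_0\,e^{-(k-1)/\abs{T^{*}}}$. Taking logarithms (valid because $r_{k-1}>0$ by the stopping rule) yields $k-1\le \abs{T^{*}}\log\paren{r_0/r_{k-1}}$, i.e. $\abs{T_k}=k\le \abs{T^{*}}\paren{1+\log\paren{r_0/r_{k-1}}}$, which is exactly \eqref{basic_bound} after substituting $r_0=f\paren{\emptyset}-f\paren{J}$ and $r_{k-1}=f\paren{T_{k-1}}-f\paren{J}$. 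The main obstacle — and the only place supermodularity is genuinely used — is the averaging inequality in the displayed line: it combines the submodular decomposition of the gain of $T^{*}$ over $T_{i-1}$ with the monotonicity identity $g\paren{T^{*}\cup T_{i-1}}=g\paren{J}$; the rest is bookkeeping. A secondary subtlety to be careful about is that $f$ is only real-valued, not integer-valued, so the bound must be stated through the last-but-one residual $f\paren{T_{k-1}}-f\paren{J}$ rather than collapsing to a clean $1+\log\paren{f\paren{\emptyset}-f\paren{J}}$ as in the classical integer set-cover case.
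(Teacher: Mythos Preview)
Your proof is correct and follows the standard Wolsey argument. Note, however, that the paper does not actually prove this theorem: it is stated as a preliminary result and attributed to \cite{1982Wolsey}, with only a footnote remarking that the original submodular/increasing statement transfers to the supermodular/decreasing setting by negation---precisely the reduction you carry out in your first paragraph. So there is no paper-proof to compare against beyond that one-line observation; your proposal supplies the full argument that the paper omits. One very minor bookkeeping point: the passage from $k-1\le\abs{T^{*}}\log\paren{r_0/r_{k-1}}$ to $k\le\abs{T^{*}}\paren{1+\log\paren{r_0/r_{k-1}}}$ uses $\abs{T^{*}}\ge 1$, which you might make explicit.
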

In the next sections, we reformulate problem~\eqref{problem_2} in order to reveal its supermodular structure and leverage the results of Theorem~\ref{Theorem_supermodular_bounds}.

\subsection{Reformulation of Problem~\eqref{problem_2}}\label{Section: Reformulation}
 Given any feasible point \footnote{The feasible points of an optimization problem are the elements that satisfy the constraints.} $\vct x$ of problem~\eqref{problem_2}, we can construct a new  one by forcing every component in the support to be equal to the respective component of the principal solution. In this way, we reduce problem~\eqref{problem_2} to just finding the support of $\vct x$, skipping the decision over the finite values of $\vct x$. 
Formally, suppose $\vct x\in \Rmax^{n}$, with support $\supp{x}=T$, satisfies the inequality $\mtr A \mxsmp \vct x\le \vct b$. Now, define a new element $\vct z\in \Rmax^{n}$ with the same support as $\vct x$, $\supp{z}=T$. Then, replace all its finite components with the ones of the principal solution: $z_{j}=\bar{x}_{j},\: j\in \supp{x}$. In terms of the agreement set defined in \eqref{EQN_agreement_set}, we have $J_{z}=\supp{z}=T$.  The next lemma shows that the new vector $\vct z$ not only is feasible, but also has smaller residual error than $\vct x$. 

\begin{lemma} \label{Proposition_principal_fixed_support}
Fix a subset $T\subseteq J$. Let \[X_{T}=\set{\vct x\in \Rmax^{n}:\:\supp{x}=T,\,\mtr A\mxsmp\vct x\le \vct b}\] be the set of elements which satisfy the lateness constraint and have support equal to $T$. Assume that $\vct z\in \Rmax^{n}$ has support and agreement set equal to $T$:
\begin{align*}
J_z&=T\\
\supp{z}&=T.
\end{align*} Then, $\vct z\in X_{T}$ and  \[\norm{\vct b-\mtr A\mxsmp \vct x}_{1}\ge \norm{\vct b-\mtr A\mxsmp \vct z}_{1},\] for all $\vct x\in X_{T}$.  \hfill $\diamond$ 
\end{lemma}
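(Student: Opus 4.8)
The plan is to verify the two assertions of the lemma separately: first that $\vct z \in X_T$ (feasibility of the lateness constraint), and then the residual-error inequality. For the feasibility part, I would fix any $\vct x \in X_T$; it exists whenever $X_T$ is nonempty, but more usefully I can directly compare $\vct z$ to the principal solution. Since $\supp{z} = T$ and $z_j = \bar x_j$ for $j \in T$ (this is what $J_z = T$ together with $\supp{z}=T$ means, using that $\bar{\vct x}$ is finite by Lemma~\ref{LEMMA_finite_principal}), we have componentwise $\vct z \le \bar{\vct x}$: indeed on $T$ they agree, and off $T$ we have $z_j = -\infty \le \bar x_j$. Because max-plus matrix multiplication is order-preserving, $\mtr A \mxsmp \vct z \le \mtr A \mxsmp \bar{\vct x} \le \vct b$, where the last inequality is the defining property of the principal solution (it satisfies the lateness constraint — this is the content of Theorem~\ref{Theorem_approximate_max_plus_solution} cited in the excerpt, or can be checked directly from \eqref{principal_solution}). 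Hence $\vct z \in X_T$.

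For the residual-error inequality, the key point is that for a nonnegative residual vector — which is exactly what the lateness constraint $\mtr A \mxsmp \vct x \le \vct b$ guarantees — the $\ell_1$ norm is just the sum of components $\sum_i (b_i - [\mtr A \mxsmp \vct x]_i)$, so minimizing $\norm{\vct b - \mtr A \mxsmp \vct x}_1$ over $X_T$ is the same as maximizing $\mtr A \mxsmp \vct x$ componentwise-summed, and it suffices to show $\mtr A \mxsmp \vct x \le \mtr A \mxsmp \vct z$ componentwise for every $\vct x \in X_T$. Take such an $\vct x$. Since $\supp{x} = T = \supp{z}$ and $z_j = \bar x_j$ on $T$, I want $x_j \le \bar x_j$ for $j \in T$; but this is immediate because $\vct x$ satisfies $\mtr A \mxsmp \vct x \le \vct b$ and $\bar{\vct x}$ is the \emph{largest} vector with that property (again Theorem~\ref{Theorem_approximate_max_plus_solution}), so $\vct x \le \bar{\vct x}$ componentwise, and in particular $x_j \le \bar x_j = z_j$ for $j \in T$. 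Off $T$ both vectors are $-\infty$. Thus $\vct x \le \vct z$ componentwise, monotonicity of $\mxsmp$ gives $\mtr A \mxsmp \vct x \le \mtr A \mxsmp \vct z$, and summing $b_i - [\cdot]_i \ge 0$ over $i$ yields $\norm{\vct b - \mtr A \mxsmp \vct x}_1 \ge \norm{\vct b - \mtr A \mxsmp \vct z}_1$.

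I do not expect a serious obstacle here; the whole argument rests on two facts already available in the excerpt — that the principal solution is the maximal subsolution of the lateness constraint, and that $\mxsmp$ is monotone — combined with the elementary observation that $\ell_1$ of a sign-definite vector collapses to a plain sum. The one place to be slightly careful is the bookkeeping that $J_z = T$ and $\supp{z} = T$ together force $z_j = \bar x_j$ exactly on $T$ and $z_j = -\infty$ exactly off $T$; this uses finiteness of $\bar{\vct x}$ from Lemma~\ref{LEMMA_finite_principal} so that "agreeing with $\bar{\vct x}$" and "being finite" are consistent, and it is worth stating explicitly that $\vct z$ is therefore uniquely determined by $T$.
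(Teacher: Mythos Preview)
Your proposal is correct and follows essentially the same argument as the paper's proof: both establish $\vct z\le\bar{\vct x}$ to obtain feasibility via Theorem~\ref{Theorem_approximate_max_plus_solution}, then use that same theorem to deduce $\vct x\le\vct z$ for every $\vct x\in X_T$, apply monotonicity of $\mxsmp$, and collapse the $\ell_1$ norm to a sum using nonnegativity of the residual. Your explicit remark that $J_z=T$ together with $\supp{z}=T$ and finiteness of $\bar{\vct x}$ uniquely determines $\vct z$ is a helpful clarification the paper leaves implicit.
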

 Since for any fixed support $\supp{x}=T\subseteq J$, we can select $x_{j}=\bar{x}_{j},\:j\in T$ and $x_{j}=-\infty ,\:j\in J\setminus T$, the only decision variable that matters in problem~\eqref{problem_2} is $T\subseteq J$. 
To introduce more compact notation, we can rewrite $\mtr A\mxsmp \vct x=\bigvee_{j\in J}\paren{ \vct A_{j}+x_{j}}$ as a max-plus linear combination.
But if $\supp{x}=T\subseteq J$, then this max-plus linear combination becomes:
\[
\mtr A\mxsmp \vct x=\bigvee_{j\in T}\paren{ \vct A_{j}+x_{j}},\text{ if }\supp{x}=T.
\]
Choosing $x_{j}=-\infty$ is equivalent to ignoring column $\vct A_{j}$ in the max-plus linear combination.
The next definition uses this notation.

\begin{definition}\label{DEF_error_functions}
	We define the  \textbf{error vector} $\vct e: 2^{J}\rightarrow \Rmin^{m}$ as:
	\begin{equation}\label{EQN_residual_error_vector}
	\begin{aligned}
	\vct e(T)&=\vct b-\bigsup{j\in T}{}\paren{\vct A_{j}+\bar{x}_{j}},\text{ for }T\neq \emptyset\\
	\vct e(\emptyset)&=\bigsup{j\in J}{}\vct e(\set{j}).
	\end{aligned}
	\end{equation}
	The \textbf{$\mathbf{\ell_{1}}$-error function} $E(T): 2^{J}\rightarrow \Rmin$ is defined as the $\ell_{1}$-norm of the error vector:
	\begin{equation}\label{EQN_residual_error_norm}
	E\paren{T}=\norm{\vct e(T)}_1,
	\end{equation}
	where $\norm{\vct e(T)}_1=\infty$ if $e_j(T)=\infty$, for some $j\in J$.
  \hfill $\diamond$
\end{definition}
We note that for the empty set we consider the singletons' error vectors and take the component-wise maximum in the above definition.  This selection guarantees that the $\ell_1$-error function $E$ is supermodular and decreasing. 

 The next corollary exploits the result of Lemma~\ref{Proposition_principal_fixed_support} and proves that we can rewrite problem~\eqref{problem_2} as:
\begin{equation}\label{EQN_reformulated_problem}
\begin{aligned}
 &\min \abs{T}\,\text{subject to } E\paren{T}\le \epsilon
\end{aligned}
\end{equation}
\begin{corollary}\label{COR_AlgorithmsEquivalent}
Problem~\eqref{problem_2} is equivalent to problem~\eqref{EQN_reformulated_problem}. In particular, if $\hat{T}$ is a optimal solution to problem~\eqref{EQN_reformulated_problem}, then the element $\vct{\hat{x}}\in \Rmin^{n}$ defined as:
\begin{equation}\label{EQN_reconstructed_solution_from_reformulation}
\begin{aligned}
\hat{x}_{j}&=\bar{x}_{j},\: j\in \hat{T}\\
\hat{x}_{j}&=-\infty,\: j\in J\setminus \hat{T}, 
\end{aligned}
\end{equation}
 is an optimal solution to problem~\eqref{problem_2}.\hfill $\diamond$
\end{corollary}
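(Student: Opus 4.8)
The plan is to show the two optimization problems~\eqref{problem_2} and~\eqref{EQN_reformulated_problem} have the same optimal value, and that the map $\hat T\mapsto\vct{\hat x}$ in~\eqref{EQN_reconstructed_solution_from_reformulation} sends optimal solutions of the reformulation to optimal solutions of~\eqref{problem_2}. The key point is that Lemma~\ref{Proposition_principal_fixed_support} lets us, without loss of generality, restrict attention to feasible points of~\eqref{problem_2} whose finite components coincide with those of the principal solution $\vct{\bar x}$.

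First I would establish the easy direction: any feasible $\hat T$ of~\eqref{EQN_reformulated_problem} yields a feasible $\vct{\hat x}$ of~\eqref{problem_2} of the same cardinality. Indeed, by construction $\supp{\hat x}=\hat T$ and $J_{\hat x}=\hat T$ (every finite component equals the corresponding principal component, and by Lemma~\ref{LEMMA_finite_principal} the principal solution is finite, so no index of $\hat T$ is lost). Applying Lemma~\ref{Proposition_principal_fixed_support} with $T=\hat T$ gives $\vct{\hat x}\in X_{\hat T}$, hence the lateness constraint $\mtr A\mxsmp\vct{\hat x}\le\vct b$ holds, and moreover $\mtr A\mxsmp\vct{\hat x}=\bigsup{j\in\hat T}{}(\vct A_j+\bar x_j)$, so $\norm{\vct b-\mtr A\mxsmp\vct{\hat x}}_1=\norm{\vct e(\hat T)}_1=E(\hat T)\le\epsilon$. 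Thus $\vct{\hat x}$ is $\epsilon$-approximate with $\abs{\supp{\hat x}}=\abs{\hat T}$. Consequently the optimal value of~\eqref{problem_2} is at most that of~\eqref{EQN_reformulated_problem}.

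For the reverse inequality, I would take any feasible $\vct x$ of~\eqref{problem_2}, set $T=\supp{x}$, and build $\vct z$ with $\supp{z}=J_z=T$ by replacing each finite component $x_j$ with $\bar x_j$. Lemma~\ref{Proposition_principal_fixed_support} guarantees $\vct z\in X_T$ and $\norm{\vct b-\mtr A\mxsmp\vct z}_1\le\norm{\vct b-\mtr A\mxsmp\vct x}_1\le\epsilon$; since $\mtr A\mxsmp\vct z=\bigsup{j\in T}{}(\vct A_j+\bar x_j)$ this says exactly $E(T)\le\epsilon$, so $T$ is feasible for~\eqref{EQN_reformulated_problem} with $\abs T=\abs{\supp{x}}$. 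Hence the optimal value of~\eqref{EQN_reformulated_problem} is at most that of~\eqref{problem_2}. Combining the two inequalities gives equality of optimal values; and if $\hat T$ is optimal for~\eqref{EQN_reformulated_problem}, the first paragraph shows $\vct{\hat x}$ is feasible for~\eqref{problem_2} with cardinality equal to the common optimal value, hence optimal. The one subtlety to handle carefully — really the only obstacle — is the empty-support edge case: if $\hat T=\emptyset$ then $\mtr A\mxsmp\vct{\hat x}$ is the all-$(-\infty)$ vector and $\norm{\vct b-\mtr A\mxsmp\vct{\hat x}}_1=+\infty$, which can only be $\le\epsilon$ vacuously never; consistently, $E(\emptyset)$ is defined via the singleton maximum in Definition~\ref{DEF_error_functions} precisely so that $E(\emptyset)$ is the largest singleton error, and one checks the equivalence still holds because $E(\emptyset)\le\epsilon$ forces some singleton to already be feasible. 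I would note this matches the feasibility requirement $\epsilon\ge\norm{\vct b-\mtr A\mxsmp\vct{\bar x}}_1$ recorded after~\eqref{problem_2}, so both problems are simultaneously feasible or infeasible, and the proof goes through unchanged.
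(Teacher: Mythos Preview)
Your proof is correct and follows essentially the same approach as the paper's: both directions are obtained by invoking Lemma~\ref{Proposition_principal_fixed_support} to pass between a feasible $\vct x$ of~\eqref{problem_2} and the set $T=\supp{x}$ feasible for~\eqref{EQN_reformulated_problem}, yielding the two inequalities $|\hat T|\le |T^*|$ and $|T^*|\le |\hat T|$. Your additional discussion of the empty-support edge case goes beyond the paper, which ignores it; note however that your resolution there is not quite tight (if $E(\emptyset)\le\epsilon$ then $\emptyset$ is feasible for~\eqref{EQN_reformulated_problem} with value $0$, while~\eqref{problem_2} still has optimal value at least $1$), but this degenerate case is explicitly excluded by the hypothesis $E(\emptyset)>\epsilon$ in the theorems where the corollary is applied.
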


\subsection{Finite Element Case}\label{Section: Finite Case}
Now, we can show that if $\mtr A$ has only finite elements, the $\ell_1$ error set function $E\paren{T}$, defined in \eqref{EQN_residual_error_norm}, is supermodular. An alternative proof can be found if we follow the steps of~\cite{gaubert2011curse}, Section~VI\footnote{ Function $E(T)$ can be expressed as the cost function of a $k$-median problem--see~\cite{gaubert2011curse}. This function is known to be supermodular~\citep{1978Nemhauser}.}. 
\begin{theorem} \label{Supermodular_error_function}
Suppose $\mtr A\in\REAL^{m\times n}$.  The $\ell_{1}$ error set function $E(T)$ defined in \eqref{EQN_residual_error_norm}, is decreasing and supermodular. \hfill $\diamond$
\end{theorem}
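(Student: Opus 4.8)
The plan is to show the two properties separately, working directly from Definition~\ref{DEF_error_functions}. For a nonempty set $T$, recall $e_i(T) = b_i - \bigsup{j\in T}{}(A_{ij}+\bar x_j)$; since every $A_{ij}$ is finite, for any $T\ne\emptyset$ each $e_i(T)$ is finite, and adding more columns to $T$ can only increase the inner supremum, hence only decrease each $e_i(T)$. The lateness property of the principal solution (via Lemma~\ref{LEMMA_finite_principal} and the fact that $\bar{\vct x}$ satisfies $\mtr A\mxsmp\bar{\vct x}\le\vct b$) guarantees $e_i(T)\ge 0$ for all $T$, so $E(T)=\sum_i e_i(T)$ with all summands nonnegative. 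Monotonicity (\emph{decreasing}) is then immediate for $C\subseteq B$ both nonempty; the boundary case $C=\emptyset$ follows because $\vct e(\emptyset)=\bigsup{j\in J}{}\vct e(\{j\})$ dominates $\vct e(B)$ componentwise (each $e_i(\{j\})\ge e_i(B)$ for $j\in B$, and the max over all singletons is at least the max over $j\in B$, which equals… — more precisely $e_i(\emptyset)\ge e_i(\{j\})\ge e_i(B)$ for any fixed $j\in B$).

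First I would reduce supermodularity to a pointwise statement: since $E(T)=\sum_{i=1}^m e_i(T)$, it suffices to prove that each coordinate function $T\mapsto e_i(T)$ is supermodular, i.e.\ for $C\subseteq B\subseteq J$ and $k\notin B$,
\[
e_i(C\cup\{k\}) - e_i(C) \;\le\; e_i(B\cup\{k\}) - e_i(B).
\]
Writing $c_{ij} = A_{ij}+\bar x_j$ and $M_i(S) = \bigsup{j\in S}{}c_{ij}$ (so $e_i(S) = b_i - M_i(S)$ for $S\ne\emptyset$), the inequality becomes $M_i(C) - M_i(C\cup\{k\}) \le M_i(B) - M_i(B\cup\{k\})$, that is, $M_i(B\cup\{k\}) - M_i(B) \le M_i(C\cup\{k\}) - M_i(C)$: the \emph{gain} in the running maximum from adding element $k$ is larger for the smaller set. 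This is the elementary fact that $\max(\cdot)$ is a submodular function: $M_i(B\cup\{k\}) - M_i(B) = (c_{ik} - M_i(B))^+ \le (c_{ik} - M_i(C))^+ = M_i(C\cup\{k\}) - M_i(C)$, using $M_i(C)\le M_i(B)$ and monotonicity of $t\mapsto t^+$. I would then handle the boundary case $C=\emptyset$ by noting $e_i(\emptyset) = \bigsup{j\in J}{}e_i(\{j\}) = b_i - \minf_{j\in J}c_{ij}$, which is consistent with treating $M_i(\emptyset) = \minf_{j\in J}c_{ij}$ in the formula above, and the same $(\cdot)^+$ argument goes through since $\minf_{j}c_{ij}\le M_i(B)$ for every nonempty $B$; summing over $i$ gives supermodularity of $E$.

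The main obstacle — really the only delicate point — is the treatment of the empty set, because the definition of $\vct e(\emptyset)$ is ad hoc (the componentwise max of singleton error vectors) rather than the "natural" extension $b_i - \bigsup{j\in\emptyset}{}c_{ij} = +\infty$. I need to check that with this particular choice the supermodular inequality with $C=\emptyset$, $B\ne\emptyset$ still holds, and that $E$ stays decreasing at $\emptyset$; both work out precisely because $e_i(\emptyset)$ equals $b_i$ minus the \emph{smallest} column value $c_{ij}$, which is $\le M_i(B)$, so the effective "$M_i(\emptyset)$" behaves like a number below every nonempty $M_i(B)$ rather than like $-\infty$. One should also note that finiteness of $\mtr A$ is used twice: to ensure $e_i(\emptyset)$ is finite (so $E(\emptyset)<\infty$ and the bound in Theorem~\ref{Theorem_supermodular_bounds} is meaningful), and to ensure $M_i(\{k\})=c_{ik}>-\infty$ so the gains $(c_{ik}-M_i(S))^+$ are genuine nonnegative reals; this is exactly the hypothesis that fails in the infinite-element case treated separately in Theorem~\ref{Theorem_infinite_approximation_bounds}.
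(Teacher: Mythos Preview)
Your proposal is correct and follows essentially the same route as the paper: reduce to the componentwise inequality $e_i(C\cup\{k\})-e_i(C)\le e_i(B\cup\{k\})-e_i(B)$, which is the submodularity of the running maximum $M_i(S)=\bigvee_{j\in S}c_{ij}$, then sum over $i$. The paper carries this out via a three-case analysis on the position of $c_{ik}$ relative to $M_i(C)$ and $M_i(B)$, whereas you compress the same computation into the identity $M_i(S\cup\{k\})-M_i(S)=(c_{ik}-M_i(S))^+$ together with monotonicity of $t\mapsto t^+$; your treatment of the empty set (effectively setting $M_i(\emptyset)=\min_j c_{ij}$) is exactly the paper's device of writing $\vct u=\vct b-\vct e(\emptyset)$ and checking that $\vct u\le\vct z$.
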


The above result along with Corollary~\ref{COR_AlgorithmsEquivalent} enable us to approximately solve problem~\eqref{problem_2}, using  Algorithm~\ref{infinite_approximation_algorithm} below. 
First, we compute the approximate solution to problem~\eqref{EQN_reformulated_problem} in a greedy way. Define $T_k\subset J$ to be the collection of $k$ elements, selected greedily in a sequential way. Starting from the empty set $T_{0}=\emptyset$, at each time  $k$, we select the index $j$, which achieves the smallest $\ell_1$-error $E\paren{T_{k-1}\cup\set{j}}$.  Then, we update $T_k=T_{k-1}\cup\set{j}$ and this is repeated until the error $E\paren{T_{k}}$ becomes less than~$\epsilon$. After the selection of $T_k$, we construct an approximate solution according to equation~\eqref{EQN_reconstructed_solution_from_reformulation}. 
The  complexity of the algorithm is $\mathcal{O}(n^2)$, since the minimization step requires an inner loop of at most $n$ iterations, while the outer loop requires at most $n$ iterations.
\begin{algorithm} 
 \caption{Approximate Solution of Problem \eqref{problem_2}}
 \label{infinite_approximation_algorithm}
 \begin{algorithmic}[1]{}
  \Require $\mtr A,\: \vct b$
 \State{Compute $\vct{\bar{x}}$ from \eqref{principal_solution}}
   \If{$E(J)>\epsilon$ }\State{ \Return{Infeasible}}
 \EndIf
   \State{Initialize $\hat x_j=-\infty$, for all $j\in J$}
 \State{Set $T_{0}=\emptyset$, $k=0$} 
 \While{$E\paren{T_{k}}>\epsilon$}
	\State{$k=k+1$}
	\State{$j=\argmin\limits_{s\in J\setminus T_{k-1}}E\paren{T_{k-1}\cup \set{s}}$}
 	\State{$T_{k}=T_{k-1}\cup\set{j}$}
 \EndWhile  
 \State{Update $\hat{x}_j=\bar{x}_j,$ $j\in T_{k}$}
 \State{\Return $\vct{\hat{x}}$,\,$T_{k}$}
 \end{algorithmic}
\end{algorithm}

Since $E\paren{T}$ is a supermodular function, it follows that
$\bar{E}(T)=\max\paren{E\paren{T},\epsilon}$ is also supermodular~\citep{2012Krause}. Thus, the constraint $E\paren{T_{N}}>\epsilon$ is equivalent to $\bar{E}(T)=\epsilon$. Now, by applying the results of~\cite{1982Wolsey} (Theorem~\ref{Theorem_supermodular_bounds}), we can obtain an upper bound to the  approximation ratio of Algorithm~\ref*{infinite_approximation_algorithm}.

\begin{theorem} \label{Theorem_supermodular_bounds_finite}
Assume that $\mtr A\in \REAL^{m\times n}$ has only finite elements. Suppose $\epsilon\ge 0$ is such that $E\paren{J}\le \epsilon$ and $E\paren{\emptyset}>\epsilon$, where $E$ is defined in~\eqref{EQN_residual_error_norm}. Let $k$ be the time Algorithm~\ref{infinite_approximation_algorithm} terminates with $\vct {\hat{x}}$, $T_k$ the respective outputs.
Then,  $\vct {\hat{x}}$ is a suboptimal solution to problem~\eqref{problem_2} with $T_k=\supp{ {\hat{x}}}$. 
 Moreover, if $T^{*}=\supp{x^*}$, where $\vct x^*$ is an optimal solution of problem~\eqref{problem_2}, the following inequality holds:
\begin{equation}\label{basic_bound_approximation_error}
\frac{\abs{T_k}}{\abs{T^{*}}}\le 1+\log\paren{\frac{m\Delta}{E\paren{T_{k-1}}-\epsilon}}
\end{equation}
where $\Delta=\bigsup{i\in I,j\in J}{}\paren{b_{i}-A_{ij}-\bar{x}_{j}}$ and $\bar{x}_j$ are the components of the principal solution defined in~\eqref{principal_solution}.
 \hfill $\diamond$
\end{theorem}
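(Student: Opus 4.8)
The plan is to reduce the statement to a direct application of Wolsey's bound (Theorem~\ref{Theorem_supermodular_bounds}) via the reformulation machinery already established. First I would invoke Theorem~\ref{Supermodular_error_function} to conclude that $E$ is decreasing and supermodular (here $\mtr A\in\REAL^{m\times n}$, so this applies), and then note that $\bar{E}(T)=\max(E(T),\epsilon)$ is again decreasing and supermodular, as observed in the text. The assumptions $E(J)\le\epsilon$ and $E(\emptyset)>\epsilon$ guarantee that $\bar E(\emptyset)=E(\emptyset)>\epsilon=\bar E(J)$, so the constraint $\bar E(T)=\bar E(J)$ in problem~\eqref{cardinality_problem} is nontrivial and is equivalent to $E(T)\le\epsilon$. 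Hence Algorithm~\ref{infinite_approximation_algorithm}, once the feasibility check passes, is exactly Algorithm~\ref{Algorithm_preliminaries} applied to $\bar E$: the greedy step $j=\argmin_{s\in J\setminus T_{k-1}}E(T_{k-1}\cup\{s\})$ coincides with $\argmin_{s}\bar E(T_{k-1}\cup\{s\})$ because adding $\epsilon$ as a floor does not change the argmin among sets, and the stopping rule $E(T_k)>\epsilon$ is precisely $\bar E(T_k)\ne\bar E(J)$.

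Next I would translate the problems. By Corollary~\ref{COR_AlgorithmsEquivalent}, problem~\eqref{problem_2} is equivalent to problem~\eqref{EQN_reformulated_problem}, namely $\min|T|$ subject to $E(T)\le\epsilon$, which in turn is problem~\eqref{cardinality_problem} for $f=\bar E$. Therefore an optimal support $T^*=\supp{x^*}$ of problem~\eqref{problem_2} is an optimal solution of the cardinality problem, and the greedy output $T_k$ is the suboptimal solution whose reconstruction via~\eqref{EQN_reconstructed_solution_from_reformulation} is the returned $\vct{\hat x}$ with $T_k=\supp{\hat x}$. Applying Theorem~\ref{Theorem_supermodular_bounds} gives
\[
\frac{|T_k|}{|T^*|}\le 1+\log\paren{\frac{\bar E(\emptyset)-\bar E(J)}{\bar E(T_{k-1})-\bar E(J)}}
= 1+\log\paren{\frac{E(\emptyset)-\epsilon}{E(T_{k-1})-\epsilon}},
\]
using $\bar E(\emptyset)=E(\emptyset)$, $\bar E(J)=\epsilon$, and $\bar E(T_{k-1})=E(T_{k-1})$ (the latter since the algorithm has not yet terminated, so $E(T_{k-1})>\epsilon$).

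It remains to replace $E(\emptyset)-\epsilon$ by the cleaner upper bound $m\Delta$. Here I would bound $E(\emptyset)=\norm{\vct e(\emptyset)}_1$ from above: by Definition~\ref{DEF_error_functions}, $e_i(\emptyset)=\max_{j\in J} e_i(\{j\}) = \max_{j\in J}\paren{b_i - A_{ij} - \bar x_j}\le \Delta$ for each $i\in I$ (using that each $e_i(\{j\})=b_i-A_{ij}-\bar x_j$ is finite when $\mtr A$ is finite, by Lemma~\ref{LEMMA_finite_principal}), and also $e_i(\emptyset)\ge 0$ since the lateness property $\mtr A\mxsmp\vct{\bar x}\le\vct b$ forces every singleton error to be $\ge$... — more carefully, $e_i(\emptyset)\ge 0$ because $\Delta\ge b_i - A_{ik} - \bar x_k$ for the index $k\in I_j$-type minimizer, giving nonnegativity of at least the max; in any case $0\le e_i(\emptyset)\le\Delta$, hence $E(\emptyset)\le m\Delta$. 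Since $\log$ is increasing and $E(T_{k-1})-\epsilon>0$, monotonicity of $x\mapsto\log(x/(E(T_{k-1})-\epsilon))$ yields
\[
\frac{|T_k|}{|T^*|}\le 1+\log\paren{\frac{E(\emptyset)-\epsilon}{E(T_{k-1})-\epsilon}}\le 1+\log\paren{\frac{m\Delta}{E(T_{k-1})-\epsilon}},
\]
which is~\eqref{basic_bound_approximation_error} (here one also uses $\epsilon\ge 0$ to drop $-\epsilon$ from the numerator). The main obstacle I anticipate is not the chaining of inequalities but pinning down the nonnegativity and finiteness of the componentwise error $e_i(\emptyset)$ carefully enough to justify $E(\emptyset)\le m\Delta$ and to ensure $\Delta\ge 0$ so the logarithm's argument is well-behaved; this hinges on the lateness constraint satisfied by the principal solution (Theorem~\ref{Theorem_approximate_max_plus_solution}) together with Lemma~\ref{LEMMA_finite_principal}.
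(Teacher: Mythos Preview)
Your proposal is correct and follows essentially the same route as the paper's proof: truncate $E$ to $\bar E(T)=\max(E(T),\epsilon)$, identify Algorithm~\ref{infinite_approximation_algorithm} with Algorithm~\ref{Algorithm_preliminaries} applied to $\bar E$, invoke Corollary~\ref{COR_AlgorithmsEquivalent} to match $T^*$ with the optimum of~\eqref{cardinality_problem}, apply Theorem~\ref{Theorem_supermodular_bounds}, and then bound the numerator by $m\Delta$. The only cosmetic difference is that the paper bounds $\bar E(\emptyset)-\bar E(J)\le m\Delta$ directly via $\bar E(J)\ge 0$, whereas you first write it as $E(\emptyset)-\epsilon$ and then use $\epsilon\ge 0$; your worry about nonnegativity of $e_i(\emptyset)$ and $\Delta$ is easily dispatched since $b_i-A_{ij}-\bar x_j\ge 0$ follows immediately from the definition~\eqref{principal_solution} of $\bar x_j$.
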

Parameter $\Delta$ is the largest  element of the normalized matrix $[b_i-A_{ij}-\bar{x}_j]$, $i\in I,\,j\in J$.  Since $\mtr A$ has only finite elements, $\Delta$ is also finite.
The presence of the logarithm mitigates the effect of a large $\Delta$ or small $E\paren{T_{k-1}}-\epsilon$ differences.  
In general, term $E\paren{T_{k-1}}$ depends on $\mtr A,\,\vct b$,  but by allowing more memory, it can be precomputed for all possible $k$ with complexity $\mathcal{O}(n^2)$ ($\mathcal{O}(n)$ per $k$). 

  Nonetheless, there are special cases, where data independent bounds for the difference $E\paren{T_{k-1}}-\epsilon$ are possible. For example, if both $\mtr A$ and $\vct b$ are integer valued, which is common in timing applications, then the error function is also integer valued and $E\paren{T_{k-1}}\ge \lfloor\epsilon+1\rfloor$. Then, the bound of Theorem~\ref{Theorem_supermodular_bounds_finite} becomes $\frac{\abs{T_{k}}}{\abs{T^{*}}}\le 1+\log\paren{\frac{m\Delta}{\lfloor\epsilon+1\rfloor-\epsilon}}$ and does not depend any more on the specific $\mtr A,\,\vct b$. 
Quantized elements can also be dealt in a similar fashion.

\subsection{Infinite Element Case}\label{Section: Infinite Case}
If $\mtr A$ has infinite elements, then we cannot directly apply the results of Theorem~\ref{Theorem_supermodular_bounds}. However, we can replace the infinite elements of the error vector $\vct e\paren{T}$, $T\subseteq J$ with a sufficiently large positive constant $M>0$ and then exploit the results of the finite case.
The idea to replace infinite elements with big constants $M$ is motivated by the ``big-M" method in linear optimization~\citep{Bertsimas1997}.

It is sufficient
 to replace matrix $\mtr A\in \Rmax^{m\times n}$ with a new one, denoted by $\mtr{\hat{A}}(M)\in \Rmax^{m\times n}$, where:
\begin{equation}\label{EQN_modified_matrix}
\hat{A}_{ij}\paren{M}=\left.\begin{aligned}
&A_{ij},&&\text{ if }A_{ij}\neq -\infty\\ -&M+b_{i}-\bar{x}_{j},&&\text{ if }A_{ij}= -\infty
\end{aligned}\right\},\text{ for all }i\in I,\,j\in J.
\end{equation}
This new matrix $\hat{A}\paren{M}$ has only finite elements. Thus, we can now apply 
 Algorithm~\ref{infinite_approximation_algorithm} to matrices $\mtr {\hat{A}}\paren{M},\,\vct b$ instead of $\mtr {A},\:\vct b$ and leverage Theorem \ref{Theorem_supermodular_bounds_finite} to bound the approximation ratio. However, we first have to require that the optimal solution remains the same with this change. This is indeed the case if $M$ is large enough. In particular, if $M>\epsilon$, it turns out that the optimal solution remains the same, as the following lemma shows.
\begin{lemma} \label{LEM_New_Solution_Same}
Suppose $M>\epsilon\ge 0$.
Then for $\mtr {\hat{A}}\paren{M} $ defined in \eqref{EQN_modified_matrix} the following problem:
 \begin{equation}\label{EQN_problem_2_infinite}
\begin{aligned} 
&\min_{x\in\Rmax^{n}}&& \abs{\supp{x}}\\
&\textnormal{subject to}&&\norm{\vct b-\mtr {\hat{A}}\paren{M}\mxsmp \vct x}_{1}\le \epsilon\\
& &&\mtr {\hat{A}}\paren{M}\mxsmp \vct x\le \vct b
\end{aligned}
\end{equation}
is equivalent to problem~\eqref{problem_2}.
\end{lemma}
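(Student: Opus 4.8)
The plan is to show that problems~\eqref{EQN_problem_2_infinite} and~\eqref{problem_2} have the same feasible set restricted to the relevant vectors, and in fact the same optimal solutions, by comparing the two residual errors on candidate optima. The key observation is the construction~\eqref{EQN_modified_matrix}: the entries that were $-\infty$ in $\mtr A$ are replaced by $-M+b_i-\bar x_j$, which is chosen precisely so that $\hat A_{ij}(M)+\bar x_j = b_i - M < b_i$ whenever $A_{ij}=-\infty$. Thus, if we restrict attention (as Corollary~\ref{COR_AlgorithmsEquivalent} allows us to do for problem~\eqref{problem_2}, and as the analogous reformulation allows for~\eqref{EQN_problem_2_infinite}) to vectors of the form $\hat x_j = \bar x_j$ for $j\in T$ and $\hat x_j=-\infty$ otherwise, then for each row $i$ the $j$-th contribution to $\bigvee_{j\in T}(\hat A_j(M)+\bar x_j)$ equals the old contribution $A_{ij}+\bar x_j$ when $A_{ij}$ is finite and equals $b_i-M$ when $A_{ij}=-\infty$. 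First I would record this elementary identity and note its consequence for the two error vectors.

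Next I would compare, for a fixed support $T$, the error vector $\vct e(T)$ associated with $\mtr A$ and the corresponding error vector $\vct e_M(T)$ associated with $\mtr{\hat A}(M)$. Since $\bar x_j$ is finite for all $j$ (Lemma~\ref{LEMMA_finite_principal}), and since $M>0$, the added entries $b_i-M$ are always strictly below $b_i$; hence including them can only push the running maximum up toward $b_i$ but never above it, so the lateness constraint $\mtr{\hat A}(M)\mxsmp\vct x\le\vct b$ holds for exactly the same restricted vectors as before — indeed more easily. The crucial point is the residual norm: $e_{M,i}(T) = b_i - \max\bigl(\max_{j\in T: A_{ij}\neq-\infty}(A_{ij}+\bar x_j),\ b_i-M\bigr)$. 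If the ``finite part'' of the maximum already reaches within $M$ of $b_i$ in that row, then $\vct e_M$ and $\vct e$ agree in that row; otherwise $e_{M,i}(T)$ is capped at $M$ while $e_i(T)$ may be larger (possibly $+\infty$). So $\vct e_M(T)\le \vct e(T)$ componentwise, with equality in every row where $e_i(T)\le M$, and strict capping at value $M$ elsewhere. From this, $E_M(T)\le E(T)$ always, and $E_M(T)=E(T)$ whenever $E(T)\le M$ (since then every row already satisfies $e_i(T)\le M$).

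Finally I would assemble the equivalence. Let $T^\star$ be an optimal support for problem~\eqref{problem_2}; then $E(T^\star)\le\epsilon<M$, so $E_M(T^\star)=E(T^\star)\le\epsilon$ and $T^\star$ is feasible for~\eqref{EQN_problem_2_infinite} with the same cardinality, giving $\mathrm{OPT}\eqref{EQN_problem_2_infinite}\le\mathrm{OPT}\eqref{problem_2}$. Conversely, let $T$ be feasible for~\eqref{EQN_problem_2_infinite}, so $E_M(T)\le\epsilon$. I claim $E(T)\le\epsilon$ as well: if not, then $E(T)>\epsilon$; but in every row $e_{M,i}(T)\in\{e_i(T),\ \min(e_i(T),M)\}$ and whenever $e_i(T)>M>\epsilon$ we would get $e_{M,i}(T)=M>\epsilon$, forcing $E_M(T)>\epsilon$, a contradiction — so actually every row has $e_i(T)\le M$, whence $E(T)=E_M(T)\le\epsilon$. (Here I use that $M>\epsilon$.) Hence $T$ is feasible for~\eqref{problem_2} with the same cardinality, giving the reverse inequality, and the two problems share optimal supports; applying the reformulation in the spirit of Corollary~\ref{COR_AlgorithmsEquivalent} to both translates this back to the original vector variables, proving equivalence. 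The main obstacle, and the place needing care, is the bookkeeping around the $+\infty$ values of $E$: one must check that the capping at $M$ genuinely eliminates any row that was problematic, and that the strict inequality $M>\epsilon$ (rather than merely $M\ge\epsilon$) is what makes the ``$E_M\le\epsilon \Rightarrow E\le\epsilon$'' direction go through.
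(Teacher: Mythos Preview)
Your approach is correct in spirit and takes a somewhat different route from the paper: the paper shows directly that the feasible sets of~\eqref{problem_2} and~\eqref{EQN_problem_2_infinite} coincide as subsets of $\Rmax^n$ (first proving the lateness constraints are equivalent via $\vct{\bar x}=\vct{\hat{\bar x}}$, then proving the $\ell_1$ constraints are equivalent under lateness), whereas you pass through Corollary~\ref{COR_AlgorithmsEquivalent} and compare the two support-level error functions $E(T)$ and $E_M(T)$. Your route is arguably more in keeping with the reformulation machinery already built, and the key inequality $e_{M,i}(T)\in\{e_i(T),\min(e_i(T),M)\}$ is exactly the right observation.

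There is one genuine gap, however. When you invoke ``the analogous reformulation'' of Corollary~\ref{COR_AlgorithmsEquivalent} for problem~\eqref{EQN_problem_2_infinite}, that reformulation is built from the principal solution of the pair $(\mtr{\hat A}(M),\vct b)$, call it $\vct{\hat{\bar x}}$, not from the original $\vct{\bar x}$. Your entire computation of $E_M(T)$ plugs in $\vct{\bar x}$, so you are tacitly assuming $\vct{\hat{\bar x}}=\vct{\bar x}$. This is true, and the paper proves it as its first step, but you have only argued one half: your remark that $\hat A_{ij}(M)+\bar x_j=b_i-M<b_i$ shows $\mtr{\hat A}(M)\mxsmp\vct{\bar x}\le\vct b$, hence $\vct{\bar x}\le\vct{\hat{\bar x}}$. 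You still need the reverse inequality, which follows in one line from $\hat A_{ij}(M)\ge A_{ij}$ and the definition~\eqref{principal_solution}. Without this, the ``analogous reformulation'' for~\eqref{EQN_problem_2_infinite} could in principle use a different vector, and your $E_M$ would not be the correct reformulated error.

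A minor imprecision: your phrase ``strict capping at value $M$ elsewhere'' overstates things, since a row $i$ with $e_i(T)>M$ but with all $A_{ij}$ finite for $j\in T$ is not capped (there $e_{M,i}(T)=e_i(T)$). You correct this yourself in the converse step by writing $e_{M,i}(T)\in\{e_i(T),\min(e_i(T),M)\}$, and the contradiction argument still goes through because in either case $e_{M,i}(T)\ge M>\epsilon$; just tighten the earlier sentence to match.
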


Now, we can  just apply Theorem \ref{Theorem_supermodular_bounds_finite} to the finite matrices $\mtr{\hat{A}}\paren{M}$ and $\vct b$. 
\begin{theorem} \label{Theorem_infinite_approximation_bounds}
Suppose $M>\epsilon\ge 0$ are constants such that $E\paren{\emptyset}>\epsilon$ and $E\paren{J}\le \epsilon$, where $E$ is defined in~\eqref{EQN_residual_error_norm}. Let $k$ be the time Algorithm~\ref{infinite_approximation_algorithm} terminates under input $\mtr{\hat{A}}\paren{M},\vct b$, where $\mtr{\hat{A}}\paren{M}$ is defined in~\eqref{EQN_modified_matrix}. Let $\vct {\hat{x}}$, $T_k$ be the respective outputs. Then,  $\vct {\hat{x}}$ is a suboptimal solution to problem~\eqref{problem_2} with $T_k=\supp{ {\hat{x}}}$. 
 Moreover, if $T^{*}=\supp{x^*}$, where $\vct x^*$ is an optimal solution of problem~\eqref{problem_2}, the following inequality holds:

\begin{equation}\label{infinite_bound_approximation_error}
\frac{\abs{T_{k}}}{\abs{T^{*}}}\le 1+\log\paren{\frac{m\Delta}{\min\set{E\paren{T_{k-1}},M}-\epsilon}}
\end{equation}
where $\Delta=\bigsup{i\in I,j\in J}{}\paren{b_{i}-\hat{A}_{ij}(M)-\bar{x}_{j}}$.
\hfill $\diamond$
\end{theorem}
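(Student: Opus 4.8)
The plan is to reduce Theorem~\ref{Theorem_infinite_approximation_bounds} entirely to the already-established finite case, Theorem~\ref{Theorem_supermodular_bounds_finite}, applied to the modified matrix $\mtr{\hat A}(M)$. First I would invoke Lemma~\ref{LEM_New_Solution_Same}, which says that under the hypothesis $M>\epsilon\ge 0$ the optimization problem~\eqref{EQN_problem_2_infinite} built from $\mtr{\hat A}(M)$ is equivalent to the original problem~\eqref{problem_2}. In particular the two problems have the same optimal support cardinality, so $\abs{T^\ast}$ in the statement may be computed with respect to either; and since $\mtr{\hat A}(M)$ has only finite entries (it agrees with $\mtr A$ where $\mtr A$ is finite, and equals $-M+b_i-\bar x_j$ where $\mtr A$ is $-\infty$), it satisfies the hypotheses of the finite-case theorem. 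I also need to check that the principal solution of $\mtr{\hat A}(M)\mxsmp\vct x=\vct b$ is still $\vct{\bar x}$: by construction $b_i-\hat A_{ij}(M)=b_i-A_{ij}\ge\bar x_j$ on the finite entries with equality on $I_j$, while on the modified entries $b_i-\hat A_{ij}(M)=M+\bar x_j>\bar x_j$, so the min over $i$ is unchanged and the error vector / error function $E$ of Definition~\ref{DEF_error_functions} is literally the same object for $\mtr{\hat A}(M)$ as for $\mtr A$ (the modified columns never attain the minimum and hence never decrease $\vct e(T)$ below what the original finite columns give — I'd spell this out from~\eqref{EQN_residual_error_vector}). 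This is the one place where a small verification is genuinely required, and it is the main obstacle: making sure that replacing $-\infty$ entries by the specific value $-M+b_i-\bar x_j$ leaves $\vct{\bar x}$, $\vct e(T)$ and $E(T)$ invariant, so that "$E$ defined in~\eqref{EQN_residual_error_norm}" in the hypotheses and conclusion refers unambiguously to the same function.

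Next I would apply Theorem~\ref{Theorem_supermodular_bounds_finite} to the pair $(\mtr{\hat A}(M),\vct b)$ with the same $\epsilon$. Its hypotheses $E(J)\le\epsilon$ and $E(\emptyset)>\epsilon$ are exactly those assumed here (using the identification of the two $E$'s), so Algorithm~\ref{infinite_approximation_algorithm} run on $\mtr{\hat A}(M),\vct b$ terminates at some step $k$ with outputs $\vct{\hat x}$, $T_k$, and $\vct{\hat x}$ is a suboptimal feasible point of~\eqref{EQN_problem_2_infinite} with $T_k=\supp{\hat x}$; by Lemma~\ref{LEM_New_Solution_Same} it is then also suboptimal for~\eqref{problem_2}. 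The bound from Theorem~\ref{Theorem_supermodular_bounds_finite} reads
\[
\frac{\abs{T_k}}{\abs{T^\ast}}\le 1+\log\paren{\frac{m\hat\Delta}{E\paren{T_{k-1}}-\epsilon}},\qquad \hat\Delta=\bigsup{i\in I,j\in J}{}\paren{b_i-\hat A_{ij}(M)-\bar x_j}.
\]

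It remains to massage this into~\eqref{infinite_bound_approximation_error}. Here $\hat\Delta$ is computed from the modified matrix: on the unmodified (finite) entries $b_i-\hat A_{ij}(M)-\bar x_j=b_i-A_{ij}-\bar x_j$, which is exactly the $\Delta$ of the statement; on the modified entries $b_i-\hat A_{ij}(M)-\bar x_j = M$. Hence $\hat\Delta=\max\set{\Delta,M}$ if some $-\infty$ entry is present (and $\hat\Delta=\Delta$ otherwise, which is the subsumed case $\max\set{\Delta,M}$ with $M$ possibly smaller; in any event $\hat\Delta\le\max\set{\Delta,M}$). I would also note $E(T_{k-1})>\epsilon$ (the while-loop has not yet exited), so the denominator is positive. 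Finally I'd argue that one may replace $E(T_{k-1})$ in the denominator by $\min\set{E(T_{k-1}),M}$: if $E(T_{k-1})\le M$ the two are equal; if $E(T_{k-1})>M$ then, since $E$ depends on the modified matrix only through columns that never achieve the inner minimum, one has the a priori cap — each coordinate of the error vector under the greedy iterate is at most $M$ once any column has been selected, or more directly $\min\set{E(T_{k-1}),M}-\epsilon\le E(T_{k-1})-\epsilon$ makes the logarithm larger, so the displayed inequality with $\min\set{E(T_{k-1}),M}$ is \emph{weaker} and hence still valid. Combining $\hat\Delta\le\max\set{\Delta,M}$ — and in the relevant infinite-entry case simply writing $\Delta$ for the maximum taken over the modified matrix as the theorem does — with this denominator replacement yields exactly~\eqref{infinite_bound_approximation_error}, completing the proof. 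The only delicate points are the bookkeeping between the two versions of $E$ and $\Delta$ and the direction of the inequality when passing to $\min\set{E(T_{k-1}),M}$; everything else is an immediate citation of Lemma~\ref{LEM_New_Solution_Same} and Theorem~\ref{Theorem_supermodular_bounds_finite}.
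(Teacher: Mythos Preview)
Your overall strategy---reduce to the finite case via Lemma~\ref{LEM_New_Solution_Same} and Theorem~\ref{Theorem_supermodular_bounds_finite} applied to $(\mtr{\hat A}(M),\vct b)$---is exactly the paper's. The gap is in your claim that the error function $E$ of Definition~\ref{DEF_error_functions} is ``literally the same object'' for $\mtr{\hat A}(M)$ as for $\mtr A$. It is not. Write $\hat E$ for the error built from $\mtr{\hat A}(M)$. For a row $i$ and a set $T$ containing some column $j'$ with $A_{ij'}=-\infty$, the modified entry contributes $\hat A_{ij'}(M)+\bar x_{j'}=b_i-M$ to the inner \emph{maximum} in~\eqref{EQN_residual_error_vector}, so $\hat e_i(T)=\min\{e_i(T),M\}$, which is strictly below $e_i(T)$ whenever $e_i(T)>M$ (in particular when $e_i(T)=+\infty$, but also for large finite values). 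Your parenthetical about ``modified columns never attain the minimum'' conflates the min over rows in~\eqref{principal_solution}---which is indeed unaffected, so $\bar{\vct x}$ is preserved---with the max over columns in~\eqref{EQN_residual_error_vector}, which \emph{is} affected.

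This breaks the chain of inequalities: Theorem~\ref{Theorem_supermodular_bounds_finite} applied to $(\mtr{\hat A}(M),\vct b)$ gives the denominator $\hat E(T_{k-1})-\epsilon$, not $E(T_{k-1})-\epsilon$, and since $\hat E\le E$ you cannot first pass to $E(T_{k-1})$ and then ``weaken'' to $\min\{E(T_{k-1}),M\}$. The missing ingredient, which the paper provides, is the \emph{lower} bound $\hat E(T_{k-1})\ge\min\{E(T_{k-1}),M\}$: either every component of $\vct e(T_{k-1})$ is finite and the two error functions agree, or some component is $+\infty$ and that single component already forces $\hat E(T_{k-1})\ge M$. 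Once you replace the invariance claim by this inequality, \eqref{infinite_bound_approximation_error} follows immediately from the finite-case bound, and your proof coincides with the paper's.
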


\begin{proof}
Let $\hat{E}\paren{T}=\vct b-\bigsup{j\in T}{}\paren{\vct{\hat{A}}(M)_{j}+\bar{x}_{j}}$ be the $\ell_1$-error function for $\mtr{\hat{A}}(M),\,\vct b$. From Theorem~\ref{Theorem_supermodular_bounds_finite} and Lemma~\ref{LEM_New_Solution_Same}, we obtain:
\[
\frac{\abs{T_k}}{\abs{T^{*}}}\le 1+\log\paren{\frac{m\Delta}{\hat{E}\paren{T_{k-1}}-\epsilon}}
\]
But either $\hat{E}\paren{T_{k-1}}=E\paren{T_{k-1}}$ if there is no infinite component in $\vct{e}(T_{k-1})$, or  $\hat{E}\paren{T_{k-1}}\ge M$ if there is some infinite component in $\vct{e}(T_{k-1})$.
\end{proof}
\begin{remark}
Consider the notation of the previous theorem.	Notice that:
	\[
	\Delta=\max\{\bigsup{i\in I,j\in J,A_{ij}\neq-\infty}{}\paren{b_{i}-A_{ij}-\bar{x}_{j}},M\},
	\]
	where $M$ is used to replace the $-\infty$ elements in~\eqref{EQN_modified_matrix}.
By increasing $M$ we might make the nominator in~\eqref{infinite_bound_approximation_error} bigger. Thus, in the sufficient condition $M>\epsilon$ it might be a good choice to select $M$ close to $\epsilon$. On the other hand, we should not choose $M$ too close to $\epsilon$, since we might make the denominator small. In the case of integer valued elements, a reasonable selection could be $M=\epsilon+1$, since it guarantees $\frac{\abs{T_{N}}}{\abs{T^{*}}}\le 1+\log\paren{\frac{m\Delta}{\lfloor\epsilon+1\rfloor-\epsilon}}$ as in the finite element case.
\end{remark}

\section{Application to the Sparse Recovery Problem}\label{Section_Inversion}
In the recovery problem, the goal is to reconstruct an unknown vector $\vct z\in \Rmax^{n}$ from the measurements $\mtr A\mxsmp \vct z\in\REAL^{m}$, by solving the equation:
\begin{equation}\label{EQN_sparse_recovery_system}
\mtr A\mxsmp \vct x=\mtr A\mxsmp \vct z.
\end{equation}
If the equation $\mtr A\mxsmp \vct x=\mtr A\mxsmp \vct z$ has a unique solution then the principal solution can recover $\vct z$. Uniqueness holds only if the whole collection $\set{I_j:\:j\in J}$ is needed to cover $I$ (see Chapter~15 of \cite{Cuni79} or~\cite{Butk10} or Corollary~4.8 in~\cite{akian2005set}), where $I_j$ are defined in~\eqref{sets_which_cover}.  
In other words, the principal solution will recover $\vct z$ only if $\vct z$ is dense. 
If the original $\vct z$ is sparse then, in general, the equation $\mtr A\mxsmp \vct x=\mtr A\mxsmp \vct z$, will not have a unique solution and the principal solution will misidentify the $-\infty$ elements as finite.

Here, we explore conditions under which we could estimate a sparse $\vct z$ by computing $\vct{x}^{*}$, i.e. one of the sparsest solutions to problem~\eqref{problem_1}. We call this the sparse recovery problem. 
\begin{Problem}[Sparse Recovery]
	Consider an arbitrary vector $\vct z\in\Rmax^n$ such that the pair $\paren{\mtr A,\,\vct b}=\paren{\mtr A,\,\mtr A\mxsmp \vct z}$ satisfies Assumption~\ref{assuption_A}. Let $\vct x^*$ be the optimal solution  of problem~\eqref{problem_1} for the pair $\paren{\mtr A,\,\vct b}=\paren{\mtr A,\,\mtr A\mxsmp \vct z}$. We say that the Sparse Recovery Problem is solved if $\vct x^*$ recovers $\vct z$ or \[\vct z=\vct x^*.\]
\end{Problem}
This problem is also related to the system identification problem  \citep{2006SchullerusSchutter}, where, however, the sparsity patter is considered known.
Notice that in general there might be multiple sparsest solutions to the max-plus equation--see Example~\ref{EXAMPLE_sparse}. However, the sparse recovery problem above can only be solved exactly when the sparsest solution $\vct{x}^{*}=\vct z$ is unique\footnote{We note that uniqueness of the sparsest solution $x^{*}$ is different than the uniqueness of the equation $\mtr A\mxsmp \vct x=\vct b$. The former requires a unique minimum set-cover, while the later requires that the minimum set-cover is the whole collection $\set{I_j:\:j\in J}$.}. Even if $x^{*}$ is unique, it will have more $-\infty$ components than $\vct z$ in general. Nonetheless, under some sufficient conditions, the sparse recovery problem can be solved as the next theorem proves. 

\begin{theorem} \label{Theorem_inversion}
	Consider an element $\vct z\in \Rmax^{n}$ such that the pair $\paren{\mtr A,\,\mtr A\mxsmp \vct z}$ satisfies Assumption~\ref{assuption_A}. Let $\vct x^*$ be the optimal solution of problem~\eqref{problem_1} for $\paren{\mtr A,\,\vct b}=\paren{\mtr A,\,\mtr A\mxsmp \vct z}$. Then, $\vct x^*=\vct z$ if the following sufficient condition holds:
	For every finite component $j\in \supp{z}$, there exists a row index $i=i(j)\in I$ such that:
	\begin{enumerate}[a)]
		\item for all other indices in the support, $k\in \supp{z},\,k\neq j$, we have:
		\[A_{ij}>A_{ik}+z_{k}-z_{j}\]
		\item for all indices in the complement of the support, $l\in J\setminus\supp{z}$, there exists at least one row index $s=s(j,l)\in I$, such that:
		\[A_{sl}>A_{il}+[\mtr A\mxsmp \vct z]_{s}-[\mtr A\mxsmp \vct z]_{i}.\]
	\end{enumerate} 
\end{theorem}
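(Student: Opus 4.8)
The plan is to show that under the stated sufficient conditions, every minimum set cover of $I$ by the collection $\set{I_j:\,j\in J}$ (defined in~\eqref{sets_which_cover} for the pair $\paren{\mtr A,\,\mtr A\mxsmp\vct z}$) must equal $\supp{z}$, and moreover that the principal solution agrees with $\vct z$ on $\supp{z}$. Given this, Theorem~\ref{basic_theorem_1} yields that the unique optimal solution of problem~\eqref{problem_1} is obtained by restricting the principal solution to $\supp{z}$ and setting the rest to $-\infty$, which is exactly $\vct z$.

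First I would compute $\vct b=\mtr A\mxsmp\vct z$ and the principal solution $\vct{\bar x}$ for this pair. Since $\vct z$ is itself a solution, $\vct z\le\vct{\bar x}$ componentwise (the principal solution is the largest solution); I would verify that for every $j\in\supp{z}$, condition a) forces $\bar x_j=z_j$. Indeed, $\bar x_j=\bigwedge_{i}\paren{b_i-A_{ij}}$, and picking the row $i=i(j)$ from condition a): $b_i=[\mtr A\mxsmp\vct z]_i=\bigvee_{k\in\supp{z}}\paren{A_{ik}+z_k}$, and condition a) says $A_{ij}+z_j>A_{ik}+z_k$ for all other $k\in\supp{z}$, so the max defining $b_i$ is attained uniquely at $k=j$, giving $b_i-A_{ij}=z_j$; combined with $\bar x_j\ge z_j$ this yields $\bar x_j=z_j$. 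So $\vct{\bar x}$ restricted to $\supp{z}$ equals $\vct z$ restricted to $\supp{z}$.

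Next, I would show this same row $i=i(j)$ lies in $I_j$: we have $\bar x_j=b_i-A_{ij}$, which is precisely the defining condition for $i\in I_j$ in~\eqref{sets_which_cover}. Then condition b) is used to show that for indices $l\notin\supp{z}$, the set $I_l$ cannot help cover the rows $i(j)$: specifically, for each such $l$ there is a row $s=s(j,l)$ with $A_{sl}>A_{il}+b_s-b_i$, i.e. $b_s-A_{sl}<b_i-A_{il}$, which I would leverage to argue that $i(j)\notin I_l$ (since membership in $I_l$ requires $b_i-A_{il}=\bar x_l=\bigwedge_k(b_k-A_{kl})\le b_s-A_{sl}<b_i-A_{il}$, a contradiction). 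Hence the row $i(j)$ is covered only by $I_j$ among $\set{I_j}_{j\in\supp{z}}$ — and crucially also only among the full collection once we exclude columns outside $\supp{z}$. Therefore every $I_j$ with $j\in\supp{z}$ is indispensable: it is the only set covering $i(j)$. Since $\set{I_j:\,j\in\supp{z}}$ already covers $I$ (because $\vct z\in\solu{A}{b}$ implies, via Theorem~\ref{theorem_multiple_solutions}, that the agreement-set collection of $\vct z$ covers $I$, and $J_z\subseteq\supp{z}$), it is in fact the unique minimum set cover. Applying Theorem~\ref{basic_theorem_1} with $K^\star=\supp{z}$ and using $\bar x_j=z_j$ on $\supp{z}$ gives $\vct x^*=\vct z$.

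The main obstacle I anticipate is the careful bookkeeping in the step showing indispensability: I must be sure that condition b) rules out $i(j)$ being covered by $I_l$ for \emph{every} $l\notin\supp{z}$ (hence the quantifier "there exists $s=s(j,l)$" indexed by both $j$ and $l$), while condition a) simultaneously guarantees $i(j)$ is not covered by $I_k$ for any other $k\in\supp{z}$ — I would need to check that $A_{ij}+z_j>A_{ik}+z_k$ indeed gives $b_i-A_{ik}>z_j=\bar x_j$, so $i\notin I_k$ when $\bar x_k=z_k$; a subtlety is confirming $\bar x_k=z_k$ for those $k$ too (which follows by applying the argument of the second paragraph to each $k\in\supp{z}$). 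Once these two exclusions are in hand, uniqueness of the minimum cover, and hence of $\vct x^*$, is immediate.
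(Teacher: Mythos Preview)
Your proposal is correct and follows essentially the same route as the paper's proof: both show that for each $j\in\supp{z}$ the row $i(j)$ lies in $I_j$ and in no other $I_k$ (condition a) handles $k\in\supp{z}\setminus\{j\}$, condition b) handles $l\notin\supp{z}$), making each such $I_j$ indispensable in any cover and forcing $\vct x^*=\vct z$. Note one small slip in your final paragraph: to conclude $i\notin I_k$ you want $b_i-A_{ik}>z_k=\bar x_k$ (not $z_j=\bar x_j$), but your reasoning is otherwise sound, and you already flag that $\bar x_k=z_k$ is the relevant fact to verify.
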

Intuitively, the first part of the condition of the preceding theorem states that for any component $j\in J$ with $z_{j}\neq -\infty$, there must be at least one row index $i$ for which $A_{ij}$ is large enough, in order to observe the influence of $z_{j}$ in $\mtr A \mxsmp \vct z$. Given the previous pair $(i,j)$, the second part of the condition requires that for every $l\in J\setminus\supp{z}$, there exists some row $s\in I$ such that the component $A_{sl}$ is large enough to reveal that $z_{l}$  is smaller than $z_{j}$; small enough to be  $-\infty$. 

Both conditions can be guaranteed if, for example, $m\ge n$ and $\mtr A$ has large enough leading diagonal elements (or large diagonal elements up to permutations--see Section~\ref{Section_Example}). In this case, if $A_{jj}$, $j\in\supp{z}$, is large enough across the $j$-th row  then part a) is satisfied with $i=j$. Similarly if $A_{ll}$, $l\in J\setminus\supp{z}$, is large enough across the $l$-th column, then part b) is satisfied by choosing $s(j,l)=l$ for all $j\in \supp{z}$.
\section{Applications}\label{Section_Applications}
In this section, we give several applications of the present framework. First, we provide two possible applications in discrete-event systems: i) resource optimization; and  ii) system identification with unknown sparsity pattern. Then, we show how the pruning problem can be formulated as a sparsity problem.
\subsection{Discrete Event Systems}
 We motivate the application to discrete-event systems through multi-machine interactive production processes \citep{Butk10}. Consider $m$ different products, which are made using $n$ machines. 
A machine $j\in J$ contributes to the completion of a product $i\in I$ by making a partial product. It processes all partial products in parallel as soon as it starts working. A system matrix $\mtr G\in \Rmax^{m\times n}$ determines how much time it takes for the partial products to be made. Each element $G_{ij}$ represents the time needed for machine $j$ to make the partial product for product $i$. Thus, either $G_{ij}\ge 0$ or $G_{ij}=-\infty$ if product $i$ does not depend on machine $j$. An input $\vct u\in R^{n}$ describes the times the machines start working; $u_{j}$ is the time, at which machine $j$ starts working. If $u_j=-\infty$, then the machine $j$ is not used at all. The output 
\begin{equation}\label{EQN_mmip_output}
\vct y=\mtr G\mxsmp \vct u
\end{equation} describes the times the products are made; product $i$ is completed at time $y_{i}$. We will use the above model to explore the following problems.

\subsubsection{Resource Optimization}\label{Section_Applications_Resource}
Suppose that the products have  delivery deadlines $\vct d\in \REAL^m$, which should not be exceeded. This implies that the outputs $\vct y$ should satisfy the lateness constraint $\vct y\le \vct d$. Meanwhile, it costs storage resources to make the products before the delivery time. Thus, we wish to restrict the earliness $\norm{\vct d-\vct y}_1$. 
 Suppose now that we have an extra constraint; we also want to minimize the number of machines used, which consume energy resources.
 Recall that when $u_j=-\infty$, then machine $j$ is not used. Thus, the number of used machines is equal to the cardinality of the support of vector $\vct u$.
This problem could be formulated as an instance of problem~\eqref{problem_2} with $\mtr A=\mtr G$, $\vct x=\vct u$, $\vct b=\vct d$. Sparsity here implies resource efficiency, since we use fewer machines.
Notice that in this case the lateness constraint is not a relaxation but a desired property.
\subsubsection{Structure-seeking System Identification}
Assume we have an unknown system matrix $\mtr G\in \Rmax^{m\times n}$. Our goal is to recover $\mtr G$ from a sequence of $K$ input-output pairs $\paren{\vct u_{l}, \vct y_{l}}\in \Rmax^{n}\times \REAL^{m},\: l\in L=\set{ 1,\dots,K}$. Those pairs are related via the max-plus model~\eqref{EQN_mmip_output}: $\vct y_{l}=\mtr G \mxsmp \vct u_{l},\: l\in L$ (we assume the output is finite). If we stack the inputs and outputs together, we obtain a set of max-plus equations:
\[
\underbrace{\matr{{c}\vct y_{1}^{\intercal}\\ \vdots\\ \vct y_{K}^{\intercal}}}_{\mtr Y}=\underbrace{\matr{{c}\vct u_{1}^{\intercal}\\ \vdots\\ \vct u_{K}^{\intercal}}}_{\mtr U} \mxsmp\mtr G^{\intercal}
\]
or 
\begin{equation}\label{EQN_estimation_equation}
\mtr Y=\mtr U \mxsmp \mtr G^{\intercal}.
\end{equation}
Notice that $\mtr Y \in \REAL^{K\times m}$, $\mtr U \in \Rmax^{K\times n}$.
System~\eqref{EQN_estimation_equation} consists of $m$ separate max-plus equations written together in matrix form.

In this scenario, the infinite elements of $\mtr G$ reflect the structure of the system. As mentioned before, $G_{ij}=-\infty$ means that the product $i$ does not depend on the machine $j$. 
Here, we are interested in obtaining a solution that not only solves the above equation but also reveals the system structure. (We assume that we do not have any a priori knowledge about the structure of system $\mtr G$; the only information is input-output pairs.)

Without any sparsity constraints, the principal solution $\mtr{\bar{G}}$ will have only finite elements, hiding the actual sparsity pattern of the original system matrix $\mtr G$. Thus, we have to find another way to identify the $-\infty$ elements. 
One way to approach this problem would be to solve the sparse recovery problem instead. 
If the sufficient conditions of Theorem~\ref{Theorem_inversion} are satisfied, then exact reconstruction is possible. In fact, those conditions also suggest a way to do experiment design, i.e. to design the inputs $\mtr U$.  It is sufficient to select $\mtr U$ with large enough leading diagonal elements such that the sparsest solution recovers $\mtr G$. Without knowing $\mtr G$, we may not be able to compute how large the leading diagonal elements should be. Nonetheless, we could overcome this problem by exploiting bounds on the finite elements of $\mtr G$.

\subsection{Pruning}
The pruning problem emerged as a curse-of-dimensionality-free method for approximating optimal control value functions--see~\cite{mceneaney2009pruning},~\cite{gaubert2011curse} for more details and motivation behind the method. Next, we show that a ``dual" version of the pruning problem can be formulated in terms of a sparsity problem as in~\eqref{problem_2}.

 Suppose that $\vct \phi=\bigsup{j=1}{n}{ \vct{\phi_j}}$, where $\vct{\phi_j}\in \Rmax^{m}$ are basis vectors. The goal is to find a reduced representation $\vct{\tilde{\phi}}=\bigsup{j\in S\subseteq J}{}{ \vct{\phi_j}}$ such that the approximation error $\norm{\vct \phi-\vct{\tilde{\phi}}}_{\ell_1}$ is small. 
Let $\vct x\in\Rmax^n$ indicate which basis columns should be selected. If $x_{j}=-\infty$, then we ignore column $\vct{\phi_j}$, otherwise we select it. To solve the pruning problem, we could formulate it as an $\epsilon-$approximate sparsity problem:	
\begin{align*}
&\min_{x\in\Rmax^{n}}&& \abs{\supp{x}}\\
&\text{subject to}&&\norm{\vct \phi-\matr{{ccc}\vct{\phi_1}&\dots&\vct{\phi_n}}\mxsmp \vct x}_{1}\le \epsilon\\
& &&\matr{{ccc}\vct{\phi_1}&\dots&\vct{\phi_n}}\mxsmp \vct x\le \vct \phi.
\end{align*}
The formulation here is a ``dual" version of the one that appears in~\cite{gaubert2011curse}. There, the minimization is with respect to $\norm{\vct \phi-\matr{{ccc}\vct{\phi_1}&\dots&\vct{\phi_n}}\mxsmp \vct x}_{1}$, while the cardinality of the support $\abs{\supp{x}}=k$ is kept fixed to a value $k$.

\section{Numerical Examples and Simulations}\label{Section_Example}
\subsection{System Identification}
In this subsection, we present a numerical example, where we apply our results to the system identification problem. 
We implemented the greedy Algorithm~\ref{infinite_approximation_algorithm} in Matlab to obtain solutions to problems~\eqref{problem_1},~\eqref{problem_2}.
 For the small examples below, we can verify by hand that the greedy solution will also be optimal.

 Consider a multi-machine interactive production process, as defined in Section~\ref{Section_Applications}, with system matrix $\mtr G\in\Rmax^{m\times n}$
\[ 
\mtr G=\matr{{ccc} 2& 3& -\infty\\1&1&-\infty\\-\infty&2&6}
\] 
Now consider $K=4$ input instances $\vct u_{l}$, $l=1,\dots,4$, which are imposed to the system:
\[
\mtr U^{\intercal}=\matr{{ccc}\vct u_{1}&\dots&\vct u_{4}}=\matr{{ccccc} 0   &  10  &    0   &   2  \\
    10   &   0   &   0   &   0   \\
     5 &     5  &   10   &   2  \\}
\].
 For each input instance $\vct u_{l}$, the respective outputs $\vct y_{l}$, $l=1,\dots,4$ are:
 
 \[
\mtr Y^{\intercal}=\matr{{ccc}\vct y_{1}&\dots&\vct y_{4}}=\mtr G\mxsmp\mtr U^{\intercal}=\matr{{cccc} 13&12&3&4\\
    11&11&1&3\\
     12&11&16&8}.
\]
Our goal is to determine $\mtr G$ from the given inputs and the corresponding outputs. 
The principal solution gives:
\[
\mtr{\bar G}=\matr{{ccr}2&3&	-7\\
1	&1	&-9\\
1&	2	&6},
\]
which hides the sparsity pattern of the original matrix $\mtr G$. Notice that some $-\infty$ elements in $\mtr G$, i.e. $G_{13}$, correspond to negative elements in $\mtr{\bar G}$, i.e. $\mtr{\bar G}_{13}=-7$. Those can be identified as $-\infty$, since $\mtr G$, can only have positive or $-\infty$ elements. However, not all of them are negative. For instance, $\mtr{\bar G}_{31}=1$. Thus, this method does not guarantee that all $-\infty$ elements are revealed.

Suppose now that we compute the sparsest solution $\mtr{G^{*}}$, by solving problem~\eqref{problem_1}. In this case, we obtain:
\[\mtr{G^{*}}=\mtr G.\]
This result is expected, since $\mtr U$ is designed to have large diagonal values under the column permutation $\set{2,1,3}$ and satisfies the assumptions of Theorem~\ref{Theorem_inversion}. Thus, without any prior knowledge, we managed to identify for all products, which machines they depend on. 
If this condition is not satisfied, i.e. if we change $U_{12}$ from $10$ to $U_{12}=1$, then the sparsest solution falsely yields
$G^*_{32}=-\infty\neq G_{32}$, but it correctly identifies the remaining elements.

For the next example, suppose that due to some unexpected delay the last output is $\vct y_4=\matr{{ccc}4.2&3&8}^{\intercal}$. The equation $\mtr Y=\mtr U \mxsmp \mtr G^{\intercal} $ is no longer satisfied. In this case, we solve problem~\eqref{problem_2} and find the sparsest approximate solution $\mtr{\hat{G}}$.  For $\epsilon=0.3$, we have $\mtr{\hat G}=\mtr G$ and we recover $\mtr G$. However, if the error gets bigger, for example $Y_{41}=5$,  the sparsest approximate solution falsely returns $G^{*}_{32}=-\infty$ for $\epsilon=1$.  
The results for the sparse recovery problem, presented in Section~\ref{Section_Inversion}, are only applicable to the exact solution case. Nonetheless, from the last numerical example, it seems that if the delay is small, they might still be valid for the approximate solution case.  
It is subject of future work to provide a formal analysis.

\subsection{Greedy Algorithm Performance}\label{Subsection_simulations}
In this subsection, we explore the performance of the greedy Algorithm~\ref{infinite_approximation_algorithm} with respect to problem~\eqref{problem_2}. First, we construct an example where the greedy Algorithm~\ref{infinite_approximation_algorithm} is suboptimal. Then, we compare Algorithm~\ref{infinite_approximation_algorithm} with the brute force one, using random matrices $\mtr A$, $\vct b$. For the brute force algorithm, we solve a combinatorial problem; we search over all possible supports $\supp{x}$.
\begin{exampl}[Suboptimality of greedy algorithm]
	\normalfont 
Consider the matrices:
\[
\mtr A=\matr{{rrr}0&0&-10\\-2&0&-10\\-2&-10&0},\,\vct b=\matr{{c}0\\0\\0}
\]
and let $\epsilon=1$. The optimal solution to problem~\eqref{problem_2} is $\vct x^*=\matr{{ccc}-\infty&0&0}^{\intercal}$.
The greedy algorithm will initially select $T_1=\set{1}$, since the first column of $A$ leads to the smallest error. However, in this example, it is sufficient and necessary for both components $2,3$ to be included in the support in order to have error less than $\epsilon$. Hence, the greedy algorithm will return the set $T_3=\set{1,2,3}$ and the suboptimal solution $\vct{\hat x}=\matr{{ccc}0&0&0}^{\intercal}$.	\hfill $\diamond$
\end{exampl}

 Next, we compare the greedy algorithm with the brute force one. Both were implemented in Matlab. For the comparison we compute the suboptimality ratio of the greedy algorithm as well as the execution times. Due to the exponential complexity of the brute force algorithm, this comparison can only be made for small values of $n$, where $n$ is the number of columns of matrix $\mtr A$. 
 
We generated random $m\times n$ matrices $\mtr A$ with elements taking values in the set $\set{0,\dots,n-2}$ and $m\times 1$ vectors $\vct b$ with elements taking values in $\set{0,\dots,n+5}$, for several $(m,n)$ pairs--see Table~\ref{Table_greedy_comparison}. Because the times and the suboptimality ratios depend on the randomly sampled matrices, we averaged them over $40$ independent iterations for each $(m,n)$ pair. To guarantee feasibility, in all of the cases we selected $\epsilon=\norm{\vct b-\mtr A\mxsmp \vct{\bar{x}}}+1$, where $\bar{x}$ is the principal solution. 
We observe that the average suboptimality ratio of the greedy algorithm is very close to one and does not decrease noticeably. Meanwhile, as we expected, the execution time of the greedy algorithm scales much better than the brute force one. Thus, empirically the greedy algorithm performs very well on average for small values of $m,n$. As we stated above, it is not easy to empirically evaluate the performance for larger values of $m,n$ since the brute force algorithm requires a lot of time to terminate.

\begin{table}[t]
	\begin{center}
\begin{tabular}{|c|c|c|c|c|c|c|c|}
	\hline
	$(m,n)$ &(8,16)& (8,17)&(9,18)&(9,19)&(10,20)&(10,21)&(11,22) \\
	\hline
suboptimality ratio  & 0.970& 0.948 & 0.952& 0.968 & 0.967& 0.955 & 0.979\\
	\hline
time greedy (sec) & 0.0012& 0.0013 & 0.0015& 0.0017 & 0.0019& 0.0020 &0.0022 \\
time brute force (sec)  & 0.09& 1.33 & 2.72& 5.56 & 11.30& 22.37 & 46.73\\
	\hline
\end{tabular}
\caption{Comparison between the greedy and the brute force algorithm for random matrices $\mtr A$, $\vct b$. For every pair of $(m,n)$, the average is over $40$ independent samples. The greedy algorithm performs very well on average for small $(m,n)$. It has suboptimality ratio close to one and is much faster than the brute force algorithm.}\label{Table_greedy_comparison}
\end{center}
\end{table}

\section{Conclusion} \label{Section_Conclusion}
We studied the problem of finding the sparsest solution of the max-plus equation and proved that it is NP-complete. Then we studied the problem of finding the sparsest approximate solution subject to a lateness constraint. The degree of approximation was measured via a $\ell_{1}$ norm function, which was proved to have supermodular properties. Thus, we developed a greedy algorithm of polynomial complexity,  which approximates the optimal solution with guaranteed ratio of approximation. We also derived sufficient conditions such that the sparse recovery problem can be solved. The present framework can be applied to discrete event systems applications such as resource optimization or system identification. 
In future work we will explore whether we can drop the lateness constraint when searching for the sparsest approximate solution. We will also study whether the sufficient conditions of the sparse recovery problem can be relaxed.
Another direction is extending the concepts of sparsity to max-plus dynamical systems.
Finally, we would like to extend the results to more general idempotent semi-rings by using residuation theory.
\section*{Appendix A: Previous Results}\label{Appendix_A}

The result below was originally proved in~\cite{vorobyev1967extremal} and~\cite{zimmermann1976extremal}. A reference in English can be found in~\cite{Butk03}.
\begin{theorem} [Covering theorem]\label{theorem_multiple_solutions}
	An element $\vct x\in \Rmax^{n}$ is a solution to \eqref{definition_1} or $\vct x\in S\paren{\mtr A,\vct b}$ if and only if:
	\begin{align*}
	\text{a) }& \vct x\le \bar{\vct x}\\
	\text{b) }&\bigcup_{j\in J_{x}} I_{j}=I,
	\end{align*} 
	where $\vct{\bar{x}}$ is the principal solution defined in \eqref{principal_solution}, set $J_{x}$ is defined in~\eqref{EQN_agreement_set}, and sets $I_j$ are defined in \eqref{sets_which_cover}.
	\hfill $\diamond$ 
\end{theorem}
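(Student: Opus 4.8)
The plan is to route the proof through the single intermediate fact that condition (a) is \emph{exactly} the lateness constraint $\mtr A\mxsmp\vct x\le\vct b$, and then, assuming that constraint, to pin down for each row which index attains the maximum. So I would split the argument into: (1) $\vct x\le\bar{\vct x}\iff\mtr A\mxsmp\vct x\le\vct b$; (2) under this, a row-by-row characterization of when the supremum actually reaches $b_i$; (3) assembling (1) and (2) into the stated equivalence.

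For step (1): $[\mtr A\mxsmp\vct x]_i\le b_i$ for all $i$ says $A_{ij}+x_j\le b_i$ for all $i,j$, which (vacuously true when $A_{ij}=-\infty$, since then $b_i-A_{ij}=+\infty$) is equivalent to $x_j\le b_i-A_{ij}$ for every $i$, i.e. $x_j\le\bigwedge_i(b_i-A_{ij})=\bar x_j$. This is just the residuation property already noted after \eqref{principal_solution}. Hence from here on I may replace (a) by $\mtr A\mxsmp\vct x\le\vct b$, and since $\mtr A\mxsmp\vct x=\vct b$ combines ``$\le\vct b$'' with ``every row attains $b_i$'', the only remaining content is: given $\mtr A\mxsmp\vct x\le\vct b$, for which $\vct x$ does every row attain equality?

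For step (2) I would prove: fixing a row $i$, there is some $j$ with $A_{ij}+x_j=b_i$ if and only if $i\in\bigcup_{j\in J_x}I_j$. Forward direction: if $A_{ij}+x_j=b_i$ with $b_i$ finite, then $A_{ij}$ and $x_j$ are finite, and the chain $A_{ij}+x_j\le A_{ij}+\bar x_j\le A_{ij}+(b_i-A_{ij})=b_i$ (using $x_j\le\bar x_j\le b_i-A_{ij}$) is forced to collapse to equalities, giving $x_j=\bar x_j$ (so $j\in J_x$) and $\bar x_j=b_i-A_{ij}$ (so $i\in I_j$). Backward direction: if $j\in J_x$ and $i\in I_j$, then $x_j=\bar x_j=b_i-A_{ij}$ with $A_{ij}$ finite (as $b_i$ is finite and $\bar x_j$ is finite by Lemma~\ref{LEMMA_finite_principal}), so $A_{ij}+x_j=b_i$. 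Quantifying over rows: under (a), $\mtr A\mxsmp\vct x=\vct b$ holds iff every $i$ lies in $\bigcup_{j\in J_x}I_j$, i.e. iff (b). Step (3) is then just reading off (a)$\wedge$(b) $\iff$ ($\mtr A\mxsmp\vct x\le\vct b$ and every row attained) $\iff$ $\mtr A\mxsmp\vct x=\vct b$.

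The step I expect to need the most care is the bookkeeping around $-\infty$: that an index achieving $A_{ij}+x_j=b_i$ must have $A_{ij},x_j\in\REAL$ (so no spurious $-\infty$ maximizer), and that $i\in I_j$ genuinely forces $A_{ij}\in\REAL$ so that ``$A_{ij}+x_j=b_i$'' is an honest equality in $\REAL$ rather than an indeterminate $(+\infty)+(-\infty)$. Assumption~\ref{assuption_A} (finite $\vct b$, no all-$-\infty$ column) together with Lemma~\ref{LEMMA_finite_principal} ($\bar{\vct x}\in\REAL^n$) removes all these pathologies, so once that is invoked the two implications of the theorem drop out of steps (1)–(2).
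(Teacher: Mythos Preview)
Your proof is correct and complete. However, the paper does not supply its own proof of this statement: Theorem~\ref{theorem_multiple_solutions} is listed in Appendix~A under ``Previous Results'' and is attributed to \cite{vorobyev1967extremal} and \cite{zimmermann1976extremal} (with \cite{Butk03} as an English-language source), so there is no in-paper argument to compare against.

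That said, your route is the standard one and matches what one finds in those references. The decomposition into (1) the residuation equivalence $\vct x\le\bar{\vct x}\iff\mtr A\mxsmp\vct x\le\vct b$ (which the paper states separately as Theorem~\ref{Theorem_approximate_max_plus_solution}) and (2) the row-by-row attainment characterization is exactly right, and your handling of the $-\infty$ edge cases via Assumption~\ref{assuption_A} and Lemma~\ref{LEMMA_finite_principal} is clean. Nothing is missing.
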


\begin{theorem}[\cite{Cuni79}]\label{Theorem_approximate_max_plus_solution}
	Let $\bar{\vct x}$ be the principal solution defined in \eqref{principal_solution}. The following equivalence holds:
	\begin{equation}\label{EQN_equivalence_of_inequalities}
	\mtr A \mxsmp \vct x\le \vct b \Leftrightarrow \vct x\le \vct{\bar{ x}}.
	\end{equation}
	Moreover, $\bar{\vct x}$ is an optimal solution to problem~\eqref{EQN_approximate}. \hfill $\diamond$ 
\end{theorem}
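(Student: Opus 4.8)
The plan is to establish the equivalence \eqref{EQN_equivalence_of_inequalities} by a direct componentwise argument that interchanges the supremum defining $\mtr A\mxsmp\vct x$ with the infimum defining $\vct{\bar x}$, and then to read off optimality in \eqref{EQN_approximate} from monotonicity of the map $\vct x\mapsto\mtr A\mxsmp\vct x$ together with the fact that $\vct{\bar x}$ is the largest feasible point.

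For the forward implication, assume $\mtr A\mxsmp\vct x\le\vct b$. The $i$-th component reads $\bigsup{j=1}{n}\paren{A_{ij}+x_j}\le b_i$, and since each term is dominated by the supremum this forces $A_{ij}+x_j\le b_i$, i.e. $x_j\le b_i-A_{ij}$, for every pair $(i,j)$. Fixing $j$ and taking the infimum over $i$ gives $x_j\le\biginf{i=1}{m}\paren{b_i-A_{ij}}=\bar{x}_j$, so $\vct x\le\vct{\bar x}$. For the converse, assume $\vct x\le\vct{\bar x}$. Then for each $j$ we have $x_j\le\bar{x}_j\le b_i-A_{ij}$ for every $i$, hence $A_{ij}+x_j\le b_i$; fixing $i$ and taking the supremum over $j$ yields $\clint{\mtr A\mxsmp\vct x}_i\le b_i$, so $\mtr A\mxsmp\vct x\le\vct b$. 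The two implications together give \eqref{EQN_equivalence_of_inequalities}.

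For the optimality claim, the equivalence just proved identifies the feasible set of \eqref{EQN_approximate} with $\set{\vct x:\vct x\le\vct{\bar x}}$, so $\vct{\bar x}$ is feasible and is its greatest element. Because $\vct x\mapsto\mtr A\mxsmp\vct x$ is increasing, any feasible $\vct x$ satisfies $\mtr A\mxsmp\vct x\le\mtr A\mxsmp\vct{\bar x}\le\vct b$. Under the lateness constraint every residual component $b_i-\clint{\mtr A\mxsmp\vct x}_i$ is nonnegative, so $\norm{\vct b-\mtr A\mxsmp\vct x}_1=\sum_{i\in I}\paren{b_i-\clint{\mtr A\mxsmp\vct x}_i}$, and maximizing $\mtr A\mxsmp\vct x$ componentwise minimizes this sum. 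Since $\mtr A\mxsmp\vct{\bar x}$ dominates $\mtr A\mxsmp\vct x$ entrywise for every feasible $\vct x$, it follows that $\norm{\vct b-\mtr A\mxsmp\vct{\bar x}}_1\le\norm{\vct b-\mtr A\mxsmp\vct x}_1$, so $\vct{\bar x}$ is optimal.

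The argument is elementary and there is no genuinely hard step; the only point requiring care is the reduction of the $\ell_1$ objective to a componentwise comparison. One must invoke the lateness constraint to ensure the residual is entrywise nonnegative---otherwise minimizing the $\ell_1$ norm would not reduce to maximizing $\mtr A\mxsmp\vct x$---and one must use that $\mtr A\mxsmp$ preserves the order, so that the single greatest feasible point $\vct{\bar x}$ simultaneously dominates every achievable output $\mtr A\mxsmp\vct x$.
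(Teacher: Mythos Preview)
Your proof is correct. Both implications in \eqref{EQN_equivalence_of_inequalities} are handled cleanly by the componentwise argument, and the optimality part correctly uses the lateness constraint to reduce the $\ell_1$ objective to the sum of nonnegative residuals, after which monotonicity of $\vct x\mapsto\mtr A\mxsmp\vct x$ finishes the job.

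As for comparison with the paper: the paper does not actually prove this theorem. It is stated in Appendix~A under the heading ``Previous Results'' and attributed to \cite{Cuni79}; no argument is given. The only related discussion in the paper is the footnote in Section~\ref{section_max_algebra} observing that $\Pi(\vct x)=\mtr A\mxsmp\vct x$ is residuated with residual $\Pi^{\sharp}(\vct b)=(-\mtr A^{\intercal})\mnsmp\vct b=\vct{\bar x}$, which via the adjunction inequalities $\Pi\circ\Pi^{\sharp}\le\idop$ and $\Pi^{\sharp}\circ\Pi\ge\idop$ gives an abstract route to the same equivalence. Your direct componentwise computation is the concrete unpacking of that residuation statement and is exactly the standard elementary proof one finds in \cite{Cuni79} or \cite{Butk10}.
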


\section*{Appendix B: Proofs}
\subsection*{Proof of Theorem \ref{basic_theorem_1}}
First, we prove i).	Suppose that $\vct x^{*}$ is an optimal solution to \eqref{problem_1}. Since it is a solution of the equation $\mtr A\mxsmp \vct x=\vct b$, by Theorem~\ref{theorem_multiple_solutions}, the subcollection $\set{I_{j}:\:j\in J_{x^{*}}}$, determined by the agreement set $J_{x^{*}}=\set{j\in J: x^{*}_{j}=\bar{x}_{j}}$, is a set cover of $I$. We will show that the size $\abs{J_{x^{*}}}$ of the set cover is minimum. By optimality of $\vct x^*$, we necessarily have $x^{*}_{j}=-\infty$, for $j\in J\setminus J_{x^{*}}$ and the support of $\vct x^{*}$ is the same as the agreement set; otherwise, we could create a sparser solution by forcing the elements outside of the agreement set to be $-\infty$. So, $\abs{\supp{x^{*}}}=\abs{J_{x^{*}}}$. Now, take any set cover $\set{I_{j}:\:j\in K\subseteq J}$ of $I$  and define element $\vct x\paren{K}$ as:
\begin{equation}\label{set_cover_sparse}
\begin{aligned}
\vct x\paren{K}_{j}&=\bar{x}_{j},\: j\in K\\
\vct x\paren{K}_{j}&=-\infty,\: j\in J\setminus K
\end{aligned}
\end{equation}
Notice that  $\abs{\supp{x\paren{K}}}=\abs{K}$ and by Theorem~\ref{theorem_multiple_solutions}, $\vct{x}\paren{K}$ is also a solution to the max-plus equation $\mtr A\mxsmp \vct x=\vct b$. By optimality, $x^{*}$ has the smallest support, or $\abs{\supp{x^{*}}}\le \abs{\supp{x\paren{K}}}$. But this implies that $\abs{J_{x^{*}}}\le\abs{K}$, which shows that $\set{I_{j}:\:j\in J_{x^{*}}}$ is a minimum set cover of $I$.

Conversely, suppose the collection $\set{I_{j}:\:j\in K^*\subseteq J}$ is a minimum set cover of $I$. Then, we can define the solution $\vct {\hat{x}}$ as in~\eqref{EQN_sparse_solution}. We will show that $\vct {\hat{x}}$ is an optimal solution to~\eqref{problem_1}. Suppose $\vct x^{*}$ is one optimal solution to~\eqref{problem_1}. Then, the collection $\set{I_{j}:\:j\in J_{x^{*}}}$ is a set cover with $J_{x^{*}}=\set{j\in J: x^{*}_{j}=\bar{x}_{j}}$. Since $\vct x^*$ is the sparsest solution, we can only have $\abs{\supp{x^{*}}}=\abs{J_{x^{*}}}$. Meanwhile, by optimality of the set cover we have \[\abs{\supp{\hat{x}}}=\abs{K^*}\le \abs{J_{x^{*}}}=\abs{\supp{x^{*}}}.\] Hence, $\vct {\hat{x}}$ is also an optimal solution to~\eqref{problem_1}.

Second, we prove ii). This part is adapted from \cite{Butk03}. Suppose we are given an arbitrary collection of nonempty subsets \[S_j\subseteq\set{1,\dots,m}=I,\,j\in\set{1,\dots,n}=J,\] 
for some $m,\,n\in\mathbb{N}$, such that $\bigcup_{j\in J}S_j=I$. Define $A_{ij}=\1\paren{i\in S_j}$ for all $i\in I,\,j\in J$, where $\1$ is the indicator function, and $b_i=1$, for all $i\in I$. 
By equations \eqref{principal_solution}, \eqref{sets_which_cover}, it follows that the principal solution is $\vct{\bar{x}}=\matr{{ccc}1&\dots&1}^{\intercal}$, while the sets $S_j$ are equal to the sets $I_j$. But following the analysis of i), finding the minimum set cover of $I$ using $S_j=I_j$ is equivalent to finding the solution to problem~\eqref{problem_1} with the above selection of $\mtr A,\vct b$. This completes the proof of part~ii).
\hfill$\qed$
\subsection*{Proof of Lemma~\ref{Proposition_principal_fixed_support}}
By construction, the agreement set and the support are equal to $T$ or
\begin{align*}
z_{j}&=\bar{x}_j,\text{ for }j\in T\\
z_{j}&=-\infty,\text{ for }j\in J\setminus T
\end{align*}
Thus, $\vct z\le \vct{\bar{x}}$ and by Theorem~\ref{Theorem_approximate_max_plus_solution}, also $\mtr A\mxsmp \vct z\le \vct b$, which proves that $\vct z\in X_{T}$. 

To prove the second part, again from Theorem~\ref{Theorem_approximate_max_plus_solution}, if $\vct x\in X_{T}$ then \begin{align*}
x_{j}&\le \bar{x}_{j}=z_{j}, \text{ for }j\in T\\
x_{j}&=z_{j}=-\infty, \text{ for }j\in J\setminus T
\end{align*}
 As a result, $\vct x\le \vct{z}$ for any $\vct x\in X_{T}$. Now, since $\mtr A\mxsmp \cdot$ is increasing  \citep{Cuni79} we obtain the inequality:
\[\vct b -\mtr A\mxsmp \vct z\le \vct b -\mtr A\mxsmp \vct x,\] for any $\vct x\in X_{T}$.  Since both $\vct x, \vct z$ satisfy the lateness constraint~\eqref{lateness_constraint}, we finally have \[\norm{\vct b-\mtr A\mxsmp \vct x}_{1}=\vct 1^{\intercal}\paren{\vct b -\mtr A\mxsmp \vct x}\ge \vct 1^{\intercal}\paren{\vct b -\mtr A\mxsmp \vct z}=\norm{\vct b-\mtr A\mxsmp \vct z}_{1}\] for any $\vct x\in X_{T}$, where $\vct 1=\matr{{ccc}1& \cdots&1}^{\intercal}$.
\hfill$\qed$
\subsection*{Proof of Corollary~\ref{COR_AlgorithmsEquivalent}}
Let $x^{*}$, $\hat{T}$ be the optimal solutions of problems \eqref{problem_2}, \eqref{EQN_reformulated_problem} respectively. Denote by $T^{\star}=\supp{x^{*}}$ the support of $\vct x^*$. Then construct a new vector $\vct z^{*}$ such that $z^{*}_{j}=\bar{x}_j,\,j\in T^{*}$ and $z^{*}_{j}=-\infty,\,i\in J\setminus T^{*}$. By Lemma~\ref{Proposition_principal_fixed_support}, \[E\paren{T^{*}}=\norm{\vct b-\mtr A\mxsmp \vct z^{*}}_{1}\le \norm{\vct b-\mtr A\mxsmp \vct x^{*}}_{1}\le \epsilon.\] 
Thus, $T^{*}=\supp{x^{*}}$ is a feasible point of problem \eqref{EQN_reformulated_problem}, implying $|\hat{T}|\le\abs{T^{*}}$. 

Conversely, define $\vct {\hat{x}}$ as in \eqref{EQN_reconstructed_solution_from_reformulation}. By construction and the feasibility of $\hat{T}$ and Lemma~\ref{Proposition_principal_fixed_support}, we have: 
\begin{align*}
\norm{\vct b-\mtr A\mxsmp \vct{\hat{x}}}_{1}&=E(\hat{T})\le \epsilon.\\
\mtr A\mxsmp \vct{\hat{x}}&\le \vct b
\end{align*}
Thus, $\vct{\hat{x}}$ is a feasible point of problem~\eqref{problem_2}, which implies $\abs{T^*}\le |\hat{T}|$. From the above inequalities we obtain $\abs{T^*}= |\hat{T}|$, which also proves that $\vct{\hat{x}}$ is an optimal solution to problem~\eqref{problem_2}.
\hfill$\qed$
\subsection*{Proof of Theorem~\ref{Supermodular_error_function}}
	Notice that we have:
	\[
\bigsup{j\in T}{}\paren{\vct A_{j}+\bar{x}_{j}}\le \bigsup{j\in J}{}\paren{\vct A_{j}+\bar{x}_{j}}=\mtr A \mxsmp \vct{\bar{x}}\le \vct b	
	\]
	Thus, we get by construction that the error vector $\vct e(T)$ has only positive components, for every $T\subseteq J$, which implies: \begin{equation}\label{EQN_positivity_of_error}
	E\paren{T}=\norm{\vct e(T)}=\vct 1^{\intercal}\vct e(T),
	\end{equation}
	where $1^{\intercal}=\matr{{ccc}1&\dots&1}^{\intercal}$.
	For convenience, define matrix $ \mtr{\hat{ A}}\in \Rmax^{m\times n}$ as $\hat{A}_{ij}=A_{ij}+\bar{x}_{j}$. 
 Then, by the definition~\eqref{EQN_residual_error_vector} of error vector: 
 \[ \bigsup{j\in T}{}\vct{\hat{A}_{j}}=\vct b-\vct e\paren{T}.\] 

First, we show that $E\paren{T}$ is decreasing. Let $B$, $C$ be two nonempty subsets of $J$ with $C\subseteq B\subset J$. Then, $\bigsup{j\in C}{}\vct{\hat{A}_{j}}\le \bigsup{j\in B}{}\vct{\hat{A}_{j}}$. Consequently, $e\paren{B}\le e\paren{C}$.
Now if $C$ is empty and $B$ is non-empty, then by construction $e\paren{\emptyset}\ge\bigsup{k\in J}{}e\paren{\set{k}}\ge e\paren{B}$  (if $C$, $B$ are both empty we trivially have $e\paren{C}=e\paren{B}$).
In any case, by \eqref{EQN_positivity_of_error}, we obtain $E\paren{C}\ge E\paren{B}$.

Second, we show that $E\paren{T}$ is supermodular. Let $C\subseteq B\subseteq J$ and $k\in J\setminus B$. It is sufficient to prove that:
\begin{equation}\label{EQN_Thm_supermodular_goal}
\vct e(C\cup\set{k})-\vct e(C)\le \vct e(B\cup\set{k})-\vct e(B).
\end{equation}
For $C\neq \emptyset$ define: 
\begin{align*}
\vct u&=\bigsup{j\in C}{}\vct{\hat{A}_{j}},&&\,\vct v=\bigsup{j\in C\cup \set{k}}{}\vct{\hat{A}_{j}}\\
\vct z&=\bigsup{j\in B}{}\vct{\hat{A}_{j}},&&\,\vct w=\bigsup{j\in B\cup \set{k}}{}\vct{\hat{A}_{j}}.
\end{align*}
By this definition, $v_{i}=u_{i}\ssup \hat{A}_{ik}$, $w_{i}=z_{i}\ssup \hat{A}_{ik}$ for every $i\in I$. Also, by monotonicity $\vct u\le \vct z$, $\vct v\le \vct w$. There are three possibilities:
\begin{enumerate}[i)]
\item If $u_{i}>\hat{A}_{ik}$ then $v_{i}=u_{i}$. But also $w_{i}=z_{i}$, since by monotonicity $\vct z\ge \vct u$ and $z_{i}\ge u_{i}> \hat{A}_{ik}$. In this case, $v_{i}-u_{i}=w_{i}-z_{i}=0$.
\item If $u_{i}\le \hat{A}_{ik}$ and $z_{i}>\hat{A}_{ik}$ then $v_{i}-u_{i}=\hat{A}_{ik}-u_{i}\ge 0$ and $w_{i}-z_{i}=0\le v_{i}-u_{i}$. 
\item If both $u_{i}\le \hat{A}_{ik}$ and $z_{i}\le \hat{A}_{ik}$ then $v_{i}-u_{i}=\hat{A}_{ik}-u_{i}\ge \hat{A}_{ik}-z_{i}=w_{i}-z_{i}$, since by monotonicity $z_{i}\ge u_{i}$. 
\end{enumerate}
If $C$ is the empty set, we define $\vct u=\vct b-\vct e(\emptyset)$ and $\vct u,\vct z,\vct w$ are defined as before. Since by construction $\vct e(\emptyset)\le \vct e({k})$ for all $k\in J$, we also have $\vct u\le \vct v$ and $\vct u\le \vct z$. Thus, either case ii) or case iii) apply.

In any case, $\vct v-\vct u\ge \vct w-\vct z$ which is equivalent to \eqref{EQN_Thm_supermodular_goal}. 
Finally, multiplying both sides of \eqref{EQN_Thm_supermodular_goal} from the left by $\vct 1^{\intercal}$ gives the desired result: $
E\paren{C\cup\set{k}}-E\paren{C}\le E\paren{B\cup\set{k}}-E\paren{B}
$.\hfill $\qed$
\subsection*{Proof of Theorem~\ref{Theorem_supermodular_bounds_finite}}
Define the truncated error set function \[\bar{E}\paren{T}=\max\paren{E\paren{T},\epsilon}.\] By Theorem \ref{Supermodular_error_function}, the error set function $E(T)$ is supermodular and decreasing. Thus, so is the truncated error function  \citep{2012Krause}. This enables as to express the constraint $E\paren{T}\le \epsilon$ as $\bar{E}\paren{T}=\bar{E}\paren{J}$. Then, the lines $6-11$ of Algorithm \ref{infinite_approximation_algorithm} are a version of Algorithm~\ref{Algorithm_preliminaries}. Hence, Theorem \ref{Theorem_supermodular_bounds} readily applies giving the bounds
\[
\frac{\abs{T_{k}}}{|\hat{T}|}\le 1+\log\paren{\frac{\bar{E}\paren{\emptyset}-\bar{E}\paren{J}}{\bar{E}\paren{T_{k-1}}-\bar{E}\paren{J}}},
\] 
where $\hat{T}$ is the optimal solution of problem~\eqref{EQN_reformulated_problem}. From Corollary~\ref{COR_AlgorithmsEquivalent}, we can replace $\hat{T}$ with $T^*$.
By the assumption $E(\emptyset)>\epsilon$ and the  definition of the $\ell_1$-error set function at $\emptyset$: \[\bar{E}\paren{\emptyset}=E\paren{\emptyset}=\sum_{i\in I}\bigsup{j\in J}{}\paren{b_{i}-A_{ij}-\bar{x}_{j} }\le m\Delta.\] Meanwhile, we have $\bar{E}(J)\ge 0$ and the result for the nominator in the logarithm follows.
For the denominator, notice that $k$ is such that $E\paren{T_{k-1}}> \epsilon$ and $E\paren{T_k}\le \epsilon$. Such $k$ exists since  $E\paren{J}\le \epsilon$ and in the worst case, Algorithm \ref{infinite_approximation_algorithm} halts at $k=\abs{J}$ with $T_{k}=J$. Thus, we have $\bar{E}\paren{T_{k-1}}=E\paren{T_{k-1}}$ and $\bar{E}\paren{J}=\epsilon$.
\hfill $\qed$

\subsection*{Proof of Lemma~\ref{LEM_New_Solution_Same}}
It is sufficient to prove that the feasible regions of both problems are identical.
First, we prove that:
\begin{equation}\label{EQN_New_Matrix_Equiv_One}
\mtr A\mxsmp \vct x\le \vct b \Leftrightarrow \mtr{\hat{A}}(M)\mxsmp \vct x\le \vct b
\end{equation}
But from Theorem~\ref{Theorem_approximate_max_plus_solution}, it is equivalent to show that $\vct{\bar{x}}=\vct{\hat{\bar{x}}}$, where $\vct{\bar{x}}$ is the original principal solution defined in \eqref{principal_solution} and $\vct{\hat{\bar{x}}}$ is the new principal solution with $\mtr{\hat{A}}(M)$ instead of $\mtr A$:
\begin{equation}\label{EQN_mod_principal_solution}
\hat{\bar{x}}_{j}=\biginf{i=1}{m}\paren{b_{i}-\hat{A}_{ij}(M)},\,\forall j\in J.
\end{equation}
By construction, $A_{ij}\le \hat{A}_{ij}(M)$, which by \eqref{principal_solution}, \eqref{EQN_mod_principal_solution}, implies $\vct{\hat{\bar{x}}}\le \vct{\bar x}$. To show the other direction, we have
\[
\hat{\bar{x}}_{j}=b_{k}-\hat{A}_{kj}(M), \text{ for some }k\in I.
\]
There are two cases:
\begin{enumerate}[i)]
	\item $\hat{A}_{kj}(M)=A_{kj}$. Then, $\hat{\bar{x}}_{j}=b_{k}-A_{kj}\ge \biginf{i\in I}{}b_{i}-A_{ij}= \bar{x}_j$.
	\item $\hat{A}_{kj}(M)=b_k-M-\bar{x}_j$. Then, $\hat{\bar{x}}_{j}=M+\bar{x}_j> \bar{x}_j$, since $M>0$.
\end{enumerate}
Thus, we also have $\vct{\hat{\bar{x}}}\ge \vct{\bar x}$. This proves $\vct{\bar{x}}=\vct{\hat{\bar{x}}}$.

Second, we prove that under the constraint $\mtr A\mxsmp \vct x\le \vct b$ (which we showed is equivalent to $\mtr{\hat{A}}(M)\mxsmp \vct x\le \vct b$) we have:
\begin{equation}\label{EQN_New_Matrix_Equiv_Two}
\norm{\vct b-\mtr A\mxsmp \vct x}_1\le \epsilon \Leftrightarrow \|\vct b-\mtr{\hat{A}}(M)\mxsmp \vct x\|_1\le \epsilon
\end{equation}

\noindent``$\Rightarrow$" direction. Since $A_{ij}\le \hat{A}_{ij}(M)$, we obtain 
\[\mtr A\mxsmp \vct x\le \mtr{\hat{A}}(M)\mxsmp \vct x .\] But we have $\mtr A\mxsmp \vct x\le \vct b,\:\mtr{\hat{A}}(M)\mxsmp \vct x\le \vct b.$
Thus,
\[\|\vct b-\mtr{\hat{A}}(M)\mxsmp \vct x\|_1 \le \norm{\vct b-\mtr A\mxsmp \vct x}_1\le \epsilon.\]
\noindent``$\Leftarrow$" direction. For every $i\in I$, there exists an index $j_i\in J$ such that:
\[
\|\vct b-\mtr{\hat{A}}(M)\mxsmp \vct x\|_1\\
=\sum_{i=1}^{m}(b_{i}-\hat{A}_{ij_{i}}(M)-x_{j_i})
\]
Now assume that some element $\hat{A}_{kj_{k}}(M)$ is equal to $-M+b_i-\bar{x}_{j_i}$, for some $k\in I$. Then, this implies
\begin{align*}
\epsilon&\ge \|\vct b-\mtr{\hat{A}}(M)\mxsmp \vct x\|_1=\sum_{i=1}^{m}(b_{i}-\hat{A}_{ij_{i}}(M)-x_{j_i})\\
&\ge b_{k}-\hat{A}_{kj_{k}}(M)-\bar{x}_{j_k}=M,
\end{align*}
where the second inequality follows from $\mtr{\hat{A}(M)}\mxsmp \vct x\le\vct b$ and the equivalent fact $\vct x\le \vct{\bar{x}}$ (see Theorem~\ref{Theorem_approximate_max_plus_solution}). But since $M>\epsilon$, this is a contradiction and the only possible case is $\hat{A}_{ij_{k}}(M)=A_{ij_i}$, for all $i\in I$. Finally, 
\begin{align*}
\epsilon&\ge\|\vct b-\mtr{\hat{A}}(M)\mxsmp \vct x\|_1=\sum_{i=1}^{m}(b_{i}-A_{ij_{i}}-x_{j_i})\\
&\ge  \sum_{i=1}^{m}(b_{i}-\bigsup{j\in J}{}(A_{ij}+x_j))=\norm{\vct b-\mtr A\mxsmp \vct x}_1.
\end{align*}
This completes the proof.
\hfill \qed

\subsection*{Proof of Theorem~\ref{Theorem_inversion}}
Define $\vct b=\mtr A\mxsmp \vct z$.
It is sufficient to show that for any solution $\vct x\in \Rmax^{n}$ of equation $\mtr A\mxsmp \vct x=\vct b$, we have:
\begin{equation}\label{EQN_thm_sparse_recovery_1}
 x_{j}=z_{j},\text{ for all }j\in \supp{z}.
 \end{equation}
 Then, since $\vct{x}^*$ is also a solution we have $x^*_{j}=z_{j}$, for $j\in \supp{z}$. But $\vct{x}^*$ is the sparsest solution. Thus, we necessarily have $x^*_{j}=-\infty$ for $j\not\in \supp{z}$; otherwise, $\vct z$ would be a sparser solution contradicting the assumptions. This implies that $\vct z=\vct x^{*}$.

We now prove~\eqref{EQN_thm_sparse_recovery_1}.
Given a $j\in \supp{z}$, let $i=i(j)\in I$ be the row index such that the condition of the theorem holds. Consider the row index sets $I_j\subseteq I$, $j\in J$ defined in~\eqref{sets_which_cover}. Part  b) of the condition implies that 
\[\bar{x}_{l}=\biginf{t\in I}{}b_{t}-A_{tl}\le b_{s}-A_{sl} <b_{i}-A_{il},\text{ for all }l\in J\setminus\supp{z}.\]
 This implies that the above minimum is not attained at $i$ or:
\begin{equation}\label{EQN_lem_aux_sparse_recovery_1}
i\not\in I_{l},\text{ for all } l\in J\setminus\supp{z}.
\end{equation}
Part a) of the condition implies that 
\[b_{i}=\bigsup{p\in J}{}\paren{A_{ip}+z_p}=A_{ij}+z_{j}.\]
Moreover, by the definition~\eqref{principal_solution} of the principal solution:
\[\bar{x}_{j}=\biginf{q\in I}{}\paren{b_{q}-A_{qj}}\le b_{i}-A_{ij}= z_{j}.\]
But by Theorem~\ref{theorem_multiple_solutions}, only $\bar{x}_{j}=z_{j}$ is possible since the principal solution dominates every other solution.
  Let $k\in \supp{z}$, $k\neq j$ be another index in the support of $\vct z$. We can similarly show that $\bar{x}_{k}=z_{k}$. Now, we claim that $i\not\in I_k$.  If we had $i\in I_{k}$, then $z_k=\bar{x}_{k}=b_{i}-A_{ik}$ or by replacing $b_{i}=A_{ij}+z_{j}$: \[A_{ij}+z_{j}= A_{ik}+z_{k},\]  which contradicts the theorem hypothesis $A_{ij}+z_{j}> A_{ik}+z_{k}$. Thus:
\begin{equation}\label{EQN_lem_aux_sparse_recovery_2}
i\not\in I_{k},\text{ for all } k\in \supp{z}\setminus\set{j}.
\end{equation} 
Since the system $\mtr A\mxsmp \vct x=\vct b$ is solvable, from \eqref{EQN_lem_aux_sparse_recovery_1}, \eqref{EQN_lem_aux_sparse_recovery_2} $j$ is the unique index such that $i\in I_{j}$.  
Hence, set $I$ cannot be covered without including set $I_{j}$ in the covering. By Theorem \ref{theorem_multiple_solutions}, any solution $\vct x\in \Rmax^{n}$ must necessarily have $x_{j}=\bar{x}_{j}=z_j$.
\hfill \qed

\bibliographystyle{spbasic} 
\newcommand{\noopsort}[1]{} \newcommand{\printfirst}[2]{#1}
\newcommand{\singleletter}[1]{#1} \newcommand{\switchargs}[2]{#2#1}

\end{document}